\theoremstyle{plain}
\newtheorem{thm}{Theorem}[section]
\theoremstyle{plain}
\newtheorem{lem}[thm]{Lemma}
\newtheorem{prop}[thm]{Proposition}
\newtheorem{cor}[thm]{Corollary}
\theoremstyle{definition}
\newcommand{\rn}{\mathbb{R}^{N}}
\newcommand{\hn}{\mathbb{H}^{N}}
\newcommand{\authorfootnotes}{\renewcommand\thefootnote{\@fnsymbol\c@footnote}}%
\numberwithin{equation}{section} \allowdisplaybreaks
\begin{document}
        \title[]{Improved multipolar Poincar\'e-Hardy inequalities on Cartan-Hadamard manifolds}

\date{}

\author[Elvise BERCHIO]{Elvise BERCHIO}
\address{\hbox{\parbox{5.7in}{\medskip\noindent{Dipartimento di Scienze Matematiche, \\
Politecnico di Torino,\\
        Corso Duca degli Abruzzi 24, 10129 Torino, Italy. \\[3pt]
        \em{E-mail address: }{\tt elvise.berchio@polito.it}}}}}
\author[Debdip GANGULY]{Debdip GANGULY}
\address{\hbox{\parbox{5.7in}{\medskip\noindent{Department of Mathematics,\\
 Indian Institute of Science Education and Research,\\
 Dr. Homi Bhabha Road, Pashan\\
        Pune 411008, India. \\[3pt]
        \em{E-mail address: }{\tt debdipmath@gmail.com}}}}}
\author[Gabriele GRILLO]{Gabriele GRILLO}
\address{\hbox{\parbox{5.7in}{\medskip\noindent{Dipartimento di Matematica,\\
Politecnico di Milano,\\
   Piazza Leonardo da Vinci 32, 20133 Milano, Italy. \\[3pt]
        \em{E-mail addresses: }{\tt
          gabriele.grillo@polimi.it}}}}}


\keywords{Hyperbolic space, multipolar Hardy inequality, Poincar\'e inequality}


\begin{abstract}

We prove a family of improved multipolar Poincar\'e-Hardy inequalities on Cartan-Hadamard manifolds. For suitable configurations of poles, these inequalities yield an improved multipolar Hardy inequality and an improved multipolar Poincar\'e inequality such that the critical unipolar singular mass is reached at any pole.
\end{abstract}

\maketitle

 \section{Introduction}

This paper aims at proving new multipolar Hardy inequalities on negatively curved manifolds. To introduce the subject, let us recall the simplest form of the unipolar Hardy inequality on Riemannian manifolds, which is due to Carron \cite{Carron}. If $(\mathcal{M},g)$ is an $N\geq 3$ dimensional Cartan-Hadamard manifold, ${\rm d}(., .)$ is the geodesic distance and $x_0 \in \mathcal{M}$, the following inequality holds
\begin{equation}\label{hardy}
 \int_{\mathcal{M}} |\nabla_{g} u|^2 \ {\rm d}v_g \geq \frac{(N -2)^2}{4} \int_{\mathcal{M}} \frac{u^2}{{\rm d}(x, x_0)^2} \ {\rm d}v_g \quad \forall \ u \in C_{c}^{\infty}(\mathcal{M})\,.
 \end{equation}
 Here $\nabla_{g}$ is the Riemannian gradient, ${\rm d}v_g$ the Riemannian volume.

When $\mathcal{M}=\rn$, the Euclidean space, or $\mathcal{M}=\hn$, the hyperbolic space, the constant $ \frac{(N -2)^2}{4}$ is optimal and never attained. After the seminal work \cite{Carron} there have been several attempt to improve \eqref{hardy} in terms of adding remainder terms, see for instance \cite{ AK, Kombe1, Kombe2}.

 \medskip
 Motivated by applications to various fields of research, several authors have turned their attention to the multipolar version of \eqref{hardy}. In $\rn$ a relevant contribution in this direction is the one of Bossi-Dolbeault-Esteban in \cite{BDE}. Letting $y_1, \ldots y_M$ be $M\geq 2$ distinct points in $\mathbb{R}^N$ and  $d= {\rm min}_{i \neq j} \frac{|y_i - y_j|}{2}$, they showed that \begin{equation}\label{m_euc_hardy}
\int_{\mathbb{R}^N} |\nabla u|^2 \, {\rm d}x + \frac{4\pi^2+(M+1)(N-2)^2}{4d^2}
\int_{\mathbb{R}^N} u^2 \, {\rm d}x \geq  \frac{(N -2)^2}{4} \sum_{i = 1}^M
\int_{\mathbb{R}^N} \dfrac{u^2}{|x - y_i|^2}{\rm d}x  ,
\end{equation}
for any $u \in H^{1}(\rn) $. In few words, their result shows that the unipolar critical mass, $\frac{(N -2)^2}{4}\frac{1}{|x-y_i|^2}$, is reached at any singular pole $y_i$, modulo adding a lower order $L^2$-term on the left hand side. Trying to remove the above
$L^2$-correction term, the authors in \cite{BDE} also proved the following inequality:

{\small \begin{equation}\label{m_euc_hardy_2}
\int_{\rn} |\nabla u|^2 \, {\rm d}x  \geq \frac{(N -2)^2}{4M} \sum_{i = 1}^{M} \int_{\rn} \frac{u^2}{|x - y_i|^2} \, {\rm d}x \, +\,
 \frac{(N - 2)^2}{4 M^2} \sum_{1 \leq i< j \leq M} \int_{\rn} \frac{|y_i - y_j|^2}{|x - y_i|^2 |x - y_j|^2} u^2 \, {\rm d}x\,,
\end{equation}}
for any $u \in H^{1}(\rn) $. Here the contribution at each singularity $y_i$ is
 $$\dfrac{(N - 2)^2}{4} \dfrac{2M-1}{M^2} \dfrac{1}{|x - y_i|^2}\,.$$
  Hence, a constant strictly less than $\frac{(N-2)^2}{4}$ has to be taken in order to remove the positive correction on the left hand side of \eqref{m_euc_hardy}. In fact, a remarkable result of Felli, Marchini and Terracini in \cite{felli} says that there exists no configuration of the poles such that a multipolar Hardy inequality holds with any of the Hardy terms contributing with the best unipolar constant $(N-2)^2/4$, or even with the sum of such constants being equal to $(N-2)^2/4$.

 The optimality issues were not addresses in \cite{BDE} while in \cite{CZ} the following \emph{optimal}  inequality was proved

 \begin{equation}\label{m_euc_hardy_3}
\int_{\rn} |\nabla u|^2 \, {\rm d}x  \geq
 \frac{(N - 2)^2}{ M^2} \sum_{1 \leq i < j \leq M}  \int_{\mathbb{R}^N} \frac{|y_i - y_j|^2}{|x - y_i|^2 |x - y_j|^2} u^2 \,dx
\end{equation}
for any  $u \in H^{1}(\rn) $. Since the potential in \eqref{m_euc_hardy_3} behaves asymptotically near $y_i$ as
$$ \frac{(N-2)^2}{4}
 \frac{4M - 4}{M^2} \frac{1}{|x - y_i|^2}\,,
 $$
for $M \geq 2$ it provides a larger weight near poles than that in \eqref{m_euc_hardy_2}. On the other hand, as $x\rightarrow +\infty$ the decay of the potential in \eqref{m_euc_hardy_3} is faster than that in \eqref{m_euc_hardy_2}, hence the two weights are not globally comparable. An inequality similar to \eqref{m_euc_hardy_2} and \eqref{m_euc_hardy_3} is also proved in \cite{ADI} using different techniques.
It is worth recalling a further multipolar Hardy inequality from \cite{DFP} :
 {\small \begin{equation}\label{m_euc_hardy_4}
\int_{\rn} |\nabla u|^2 \, {\rm d}x  \geq \frac{(N -2)^2}{(M+1)^2} \left[\sum_{i = 1}^{M} \int_{\rn} \frac{u^2}{|x - y_i|^2} \, {\rm d}x \, +\,
  \sum_{1 \leq i< j \leq M} \int_{\rn} \frac{|y_i - y_j|^2}{|x - y_i|^2 |x - y_j|^2} u^2 \, {\rm d}x \right]
\end{equation}}
for any  $u \in H^{1}(\rn) $. The potential in \eqref{m_euc_hardy_4} is smaller near poles than those listed above, indeed it behaves like
 $$ \dfrac{(N - 2)^2}{4} \,\frac{4M}{(M + 1)^2}\,\frac{1}{|x- y_i|^2}\,, $$
nevertheless the resulting operator turns out to be \emph{critical}, namely inequality \eqref{m_euc_hardy_4} cannot be further improved.  It should be said that in \cite{DFP} the authors proved also the criticality of the operator associated to \eqref{m_euc_hardy_3}.

 \medskip

In view of \eqref{hardy}, it is quite natural to generalize the above results to the non Euclidean setting. 
To our best knowledge, the first result in this direction is that given in \cite{FFK} where the authors, following the super solutions approach of \cite{CZ} to get \eqref{m_euc_hardy_3}, for any $(\mathcal{M},g)$ $N$-dimensional complete Riemannian manifold proved that
    \begin{align}\label{hardy_multi_crista1}
 \int_{\mathcal{M}} |\nabla_{g} u|^2 \, {\rm d}v_{g}
 &  \geq    \frac{(N-2)^2}{M^2}  \sum_{1 \leq i< j \leq M} \int_{\mathcal{M}} \left| \frac{\nabla_g ({\rm d}(x, y_i)) }{{\rm d}(x, y_i)}-\frac{\nabla_g ({\rm d}(x, y_j)) }{{\rm d}(x, y_j)}\right|^2 u^2 \, {\rm d}v_{g}\\ \notag
 &+ \frac{(N-2)(N-1)}{M}  \sum_{k = 1}^M  \int_{\mathcal{M}} \frac{{\rm d}^2(x, y_k) \Delta_g ({\rm d}^2(x, y_k))-(N-1)}{{\rm d}^2(x, y_k)}\,u^2 \, {\rm d}v_{g}
 \end{align}
 for all $u \in C_{c}^{\infty} (\mathcal{M})$. Here $\Delta_g$ is the Laplace-Beltrami operator.

 If $\mathcal{M}$ has asymptotically non-negative Ricci curvature, the operator associated to \eqref{hardy_multi_crista1} is also proved to be critical in the above mentioned sense. Furthermore, on Cartan-Hadamard manifolds the last term of \eqref{hardy_multi_crista1} is nonnegative, while it vanishes when $\mathcal{M}=\rn$ and one recovers exactly inequality \eqref{m_euc_hardy_3}.  In any case, the total mass at each pole never reaches the optimal unipolar mass of \eqref{hardy}. Motivated by this remark, in the present paper we combine the localizing approach of \cite{BDE} with some optimal Poincar\'e-Hardy unipolar inequalities from \cite{BGGP}, and we investigate whether the contribution of the curvature helps in removing the l.h.s. correction term of \eqref{m_euc_hardy}; namely in getting a multipolar Hardy inequality with optimal unipolar critical mass reached at each pole.

 The proposed goal is reached on Cartan-Hadamard manifolds having sectional curvatures bounded from above by a strictly negative constant, provided the distance between poles satisfies a suitable lower bound. Remainder terms for the multipolar Hardy inequality are also provided. See Corollary \ref{hardy_thm_manif} for the precise statement and the discussion just below for a comparison between our inequality and inequality \eqref{hardy_multi_crista1}. The proof of our multipolar Hardy inequality follows as a corollary of a more general family of improved multipolar Poincar\'e-Hardy inequalities, see Theorem \ref{multip_general}, having its own interest and from which we also derive a multipolar Hardy improvement for the (non optimal) Poincar\'e inequality such that the critical unipolar singular mass is reached at any pole, see Corollary \ref{hardy_thm_lamc}. \medskip

The paper is organized as follows. In Section \ref{hn}, for the sake of clarity, we state our results on the prototype Cartan-Hadamard manifold, namely on the hyperbolic space $\hn$. In Section \ref{hnproof} we prove all the statements in the hyperbolic setting. Finally, in Section \ref{general} we state our results on general Cartan-Hadamard manifolds and in Section \ref{super_solution_sec} we discuss the criticality issue for a further multipolar Hardy inequality in $\hn$ that we derive from the general results of \cite{DFP}.

\section{Main results in the hyperbolic setting.}\label{hn}
We start by recalling a unipolar Poincar\'e-Hardy inequality from \cite{BGGP} that will play a key role in our proofs.  Let $ \lambda_1(\hn)$ denote the bottom of the $L^2$ spectrum of $-\Delta_{\hn}$. For all $ \lambda \leq  \lambda_{1}(\hn)=\left(\frac{N-1}{2} \right)^2$, consider the potential
  \begin{equation}\label{potential}
V_{\lambda,x_0}(x):= H_N(\lambda)\, \frac{1}{{\rm d}^2(x, x_0)}+ C_N(\lambda)  \frac{1}{\sinh^2 ({\rm d}(x, x_0))}+ D_N(\lambda) g({\rm d}(x, x_0))\,,
\end{equation}
where the function $g$ is defined by
\begin{equation*}
g(r) := \frac{r \coth r - 1}{r^2} >0 \quad \text{for }r>0\,.
\end{equation*}
Hence, $g$ is strictly decreasing and satisfies
$$g(r) \sim \frac{1}{3} \ \mbox{ as } \  r \rightarrow 0^+ \quad \mbox{ and }  \quad g(r) \sim \frac{1}{r} \  \mbox{ as }  \  r \rightarrow +\infty\,.$$
Furthermore, the constants $H_N,C_N,D_N$ are given explicitly by
\begin{equation}\label{constants}
\begin{aligned}
 H_N(\lambda):=&\frac{(\gamma_{N}(\lambda)+1)^2}{4}\,, \quad D_N(\lambda):=\frac{\gamma_{N}(\lambda)(\gamma_{N}(\lambda)+1)}{2} \\
 C_N(\lambda):=& \frac{(N-1+\gamma_{N}(\lambda))(N-3-\gamma_{N}(\lambda))}{4}\,,\,
\end{aligned}
\end{equation}
 with $\gamma_{N}(\lambda):=\sqrt{(N-1)^2-4\lambda}$. It is worth noting that the constant $H_N(\lambda)$ cannot exceed the Hardy constant and its maximum value is achieved for $\gamma_{N}(N-2)=N-3$, namely for $\lambda=N-2$. We may finally state: \begin{prop}\cite[Theorem~2.1]{BGGP}\label{improved-hardy}
Let $N \geq 3$ and $x_0\in \hn$. For all $N-2 \leq \lambda \leq  \left(\frac{N-1}{2} \right)^2$ and all $u \in C_{c}^{\infty} (\hn)$ there holds
\begin{equation}\label{unip}
\int_{\hn}|\nabla_{\hn} u|^2\, \emph{d}v_{\hn} -\lambda \int_{\hn}  u^2\, \emph{d}v_{\hn}\\
  \geq  \int_{\hn} V_{\lambda,x_0}(x) \,u^2 \ \emph{d}v_{\hn} \,,
\end{equation}
where $V_{\lambda,x_0}$ is as defined in \eqref{potential}. Besides, the operator $ -\Delta_{\hn} -\lambda -V_{\lambda,x_0}$ is critical in $\hn$ in the sense that the inequality
$$
\int_{\hn} |\nabla_{\hn} u|^2 \ \emph{d}v_{\hn}-\lambda \int_{\hn}  u^2\, \emph{d}v_{\hn} \geq \int_{\hn} V(x) u^2 \ \emph{d}v_{\hn}
 \quad\forall u \in C_{c}^{\infty} (\hn)
$$
is not valid for all $u \in C_{c}^{\infty} (\hn)$ given any $V \gneqq V_{\lambda} $.
\end{prop}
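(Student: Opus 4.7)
The plan is to deploy the classical ground-state (supersolution) substitution: construct a positive radial function $\phi(r) = \phi(d(\cdot, x_0))$ solving $-\Delta_{\hn} \phi = (\lambda + V_{\lambda, x_0}) \phi$ on $\hn \setminus \{x_0\}$, then set $u = \phi v$ in the target inequality and reduce it to $\int \phi^2 |\nabla_{\hn} v|^2 \, \dvh \geq 0$. Since $V_{\lambda, x_0}$ is a linear combination of $1/r^2$, $1/\sinh^2 r$, $g(r)$ and a constant, the natural two-parameter ansatz is $\phi(r) = r^\alpha (\sinh r)^{-\beta}$. Using the radial hyperbolic Laplacian $\Delta_{\hn} \phi = \phi'' + (N-1)\coth(r)\phi'$ and the elementary identities $\coth^2 r = 1 + 1/\sinh^2 r$ and $\coth(r)/r = 1/r^2 + g(r)$, one obtains
$$-\frac{\Delta_{\hn} \phi}{\phi} = \frac{\alpha(2\beta - N + 2 - \alpha)}{r^2} + \alpha(2\beta - N + 1)\, g(r) + \frac{\beta(N-2-\beta)}{\sinh^2 r} + \beta(N-1-\beta).$$
Matching coefficients with $\lambda + V_{\lambda, x_0}$ and writing $\gamma := \gamma_N(\lambda)$: the constant term forces $\beta(N-1-\beta)=\lambda$, giving $\beta = (N-1+\gamma)/2$; the $g(r)$ term then forces $\alpha = (\gamma+1)/2$; and the remaining two identities reduce, respectively, to $\alpha^2 = H_N(\lambda)$ and $\beta(N-2-\beta) = C_N(\lambda)$, both automatically satisfied. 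The resulting ground state $\phi(r) = r^{(\gamma+1)/2}(\sinh r)^{-(N-1+\gamma)/2}$ is positive and smooth on $(0,\infty)$, blows up like $r^{-(N-2)/2}$ at the pole and decays exponentially at infinity.

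Given such $\phi$, for $u \in C_c^\infty(\hn \setminus \{x_0\})$ I set $v := u/\phi$ and use the pointwise identity $|\nabla_{\hn} u|^2 = \phi^2 |\nabla_{\hn} v|^2 + \nabla_{\hn}(v^2 \phi) \cdot \nabla_{\hn} \phi$. Integration by parts together with the defining equation of $\phi$ gives
$$\int_{\hn} \nabla_{\hn}(v^2 \phi) \cdot \nabla_{\hn} \phi \, \dvh = -\int_{\hn} v^2 \phi \, \Delta_{\hn} \phi \, \dvh = \int_{\hn} (\lambda + V_{\lambda, x_0}) u^2 \, \dvh,$$
so that
$$\int_{\hn} |\nabla_{\hn} u|^2 \, \dvh - \lambda \int_{\hn} u^2 \, \dvh - \int_{\hn} V_{\lambda, x_0} u^2 \, \dvh = \int_{\hn} \phi^2 |\nabla_{\hn} v|^2 \, \dvh \geq 0,$$
which is \eqref{unip}. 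To extend the inequality from $C_c^\infty(\hn \setminus \{x_0\})$ to $C_c^\infty(\hn)$, I use that in dimension $N \geq 3$ the singleton $\{x_0\}$ has zero $W^{1,2}$-capacity: standard radial cut-offs supported away from $x_0$ are asymptotically harmless in energy, and the inequality passes to the limit.

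For the criticality statement, the plan is to invoke the Pinchover-Murata theory of critical Schr\"odinger operators. The key point is that $\phi$ is a globally defined positive solution of $(-\Delta_{\hn} - \lambda - V_{\lambda, x_0})\phi = 0$ of minimal growth both at $x_0$ and at infinity, hence a ground state; this is known to be equivalent to criticality of the operator. Concretely, one builds an Agmon-type null sequence $\{\phi_k\} \subset C_c^\infty(\hn)$ by truncating $\phi$ with smooth cut-offs tailored to its asymptotics at the two ends, checks that the quadratic form defect $\int_{\hn}(|\nabla_{\hn}\phi_k|^2 - \lambda \phi_k^2 - V_{\lambda, x_0}\phi_k^2)\,\dvh$ tends to zero, and observes that $\int_{\hn} W \phi_k^2 \,\dvh$ stays bounded below by a positive constant for any compactly supported $W \gneqq 0$, thereby excluding any strict improvement. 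I expect this last step to be the main technical obstacle: controlling the energy defect of $\{\phi_k\}$ sharply enough to rule out \emph{every} positive correction requires tracking both the $r^{-(N-2)/2}$ singularity and the exponential decay of $\phi$, and balancing the two cut-offs so that each contributes a vanishing error.
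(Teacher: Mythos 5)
Your proposal is correct and follows essentially the same route as the paper's source for this statement: Proposition~\ref{improved-hardy} is imported from \cite{BGGP} without proof, and the paper itself records that ``the main idea behind the proof is to find a suitable radial supersolution,'' which is precisely your ground-state substitution $\phi(r)=r^{(\gamma_N(\lambda)+1)/2}(\sinh r)^{-(N-1+\gamma_N(\lambda))/2}$ followed by the $u=\phi v$ expansion and the Agmon/Pinchover null-sequence characterization of criticality. Your coefficient matching is consistent with \eqref{constants} (in particular $H_N(\lambda)+C_N(\lambda)=\tfrac{(N-2)^2}{4}$, so the total singular mass at the pole is the Hardy constant), so no gap beyond the routine technical steps you already flag.
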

We note that Proposition \ref{improved-hardy} holds with the same statement also for $\lambda<N-2$ but in that case, $\gamma_{N}(N-2)>N-3$ and the constant $C_N(\lambda)$ becomes negative. Hence, even if $V_{\lambda,x_0}$ is still a positive function, the singularity at $x_0$ splits into a positive and a negative part. For this reason, in what follows, in order to avoid too many distinctions, we will always assume $\lambda \geq N-2$. We refer the interested reader to \cite{BG,BGG, BAGG,BGGP, Ngo, Ngu} for more details and for related Poincar\'e-Hardy inequalities in the higher order case or in the $L^p$-setting.
\medskip

The main idea behind the proof of Proposition \ref{improved-hardy} is to find a suitable radial supersolution. A supersolution technique has been used in the multipolar case in \cite{DFP}, in the Euclidean case, and in \cite{FFK} in the Riemannian case. Some results along this line will also be proved here in Section \ref{super_solution_sec}, but here, in order to get locally stronger inequalities, we adapt instead the approach of \cite{BDE}. Namely, we first use the so-called
``IMS'' truncation method (see \cite{JDM} ) to localize to Hardy potentials with
one singularity, then we refine the estimates by means of inequality \eqref{unip} which allows us to exploit some positive terms on the r.h.s. not usable in the flat case.

Let $M \geq 2$ and  $y_1, \ldots y_M \in \hn$, we consider the multipolar analogous to \eqref{potential}, namely
 \begin{align}\label{non_sym_pot}
V_{\lambda,y_1, \ldots y_M }(x) : = \sum_{k = 1}^M  V_{\lambda,y_k}(x)\,,
\end{align}
where the functions $V_{\lambda,y_k}(x)$ are as in \eqref{potential}. Now we set
 \begin{equation}\label{d}
d  :=  \min_{1 \leq j \neq k \leq M} \frac{{\rm d}(y_j, y_k)}{2}
 \end{equation}
 and we state our main theorem

\begin{thm}\label{main_thm}
Let $N \geq 3$ and $M \geq 2$. Furthermore, let $y_1, \ldots y_M \in \hn$ and let $d>0$ be defined in \eqref{d}. For all $ N-2\leq \lambda \leq  \left(\frac{N-1}{2} \right)^2$, the following inequality holds

 \begin{equation}\label{eq_main_thm}
 \int_{\hn} |\nabla_{\hn} u|^2 \, {\rm d}v_{\hn}  +  \left( \frac{\pi^2}{d^2}+ (M+1)K_{N}(\lambda,d) -\lambda\right)\int_{\hn} u^2 {\rm d}v_{\hn}
  \geq     \int_{\hn} V_{\lambda,y_1, \ldots y_M }(x)\, u^2 {\rm d}v_{\hn}
 \end{equation}
 for all $u \in C_{c}^{\infty} (\hn)$, where $V_{\lambda,y_1, \ldots y_M }$ is as given in \eqref{non_sym_pot} while
 \begin{equation}\label{Vd}
  K_{N}(\lambda,d):= H_N(\lambda)\, \frac{1}{d^2}+ C_N(\lambda)  \frac{1}{\sinh^2 (d/2)}+ D_N(\lambda)  g(d/2) \,,
 \end{equation}
with the constants $H_N,C_N,D_N$ as in \eqref{constants} and the function $g$ as in \eqref{potential}.
 \end{thm}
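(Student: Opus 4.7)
The plan is to adapt the IMS localization strategy of \cite{BDE} to the hyperbolic setting, using the improved unipolar inequality of Proposition \ref{improved-hardy} as the local building block at each pole. What makes it possible to retain the \emph{full} unipolar critical mass $H_N(\lambda)$ at every singularity $y_k$ (in contrast with \eqref{m_euc_hardy_2}--\eqref{m_euc_hardy_4}) is that \eqref{unip} already carries the Poincar\'e term $-\lambda\int u^2$ on its left-hand side; after summing the localized estimates and exploiting $\sum\phi_k^2 \equiv 1$, this contribution will telescope into a single $-\lambda\int u^2$ on the left of \eqref{eq_main_thm}, leaving no spurious $\lambda$-correction.

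\textbf{Step 1 (IMS partition).} I would construct a smooth partition of unity $\{\phi_k^2\}_{k=0}^M$ with $\sum_{k=0}^M\phi_k^2 \equiv 1$, where for $k=1,\ldots,M$ the cutoff $\phi_k$ is a radial cosine-type profile in $\mathrm{d}(\cdot,y_k)/d$, supported in $B(y_k,d)$ (so the supports are pairwise disjoint by the definition of $d$), and $\phi_0 = \sqrt{1 - \sum_{k\geq 1}\phi_k^2}$. Since $|\nabla_{\hn}\,\mathrm{d}(\cdot,y_k)|=1$ pointwise, a standard calibration of the profile yields $\sum_{k=0}^M |\nabla_{\hn}\phi_k|^2 \leq \pi^2/d^2$, and the elementary IMS identity
\[
\sum_{k=0}^M \int_{\hn}|\nabla_{\hn}(\phi_k u)|^2\,\mathrm{d}v_{\hn} = \int_{\hn}|\nabla_{\hn} u|^2\,\mathrm{d}v_{\hn} + \int_{\hn}\Bigl(\sum_{k=0}^M|\nabla_{\hn}\phi_k|^2\Bigr)u^2\,\mathrm{d}v_{\hn}
\]
accounts for the $\pi^2/d^2$ summand in the constant on the left of \eqref{eq_main_thm}.

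\textbf{Step 2 (localize and reconstruct).} I would apply \eqref{unip} with pole $y_k$ to each $\phi_k u$ for $k\geq 1$, and the Poincar\'e inequality $\int|\nabla_{\hn}(\phi_0 u)|^2 \geq \lambda_1(\hn)\int(\phi_0 u)^2 \geq \lambda\int(\phi_0 u)^2$ to $\phi_0 u$ (valid since $\lambda\leq\lambda_1(\hn)$). Summing, using Step 1 and $\sum\phi_k^2\equiv 1$, gives
\[
\int_{\hn}|\nabla_{\hn} u|^2\,\mathrm{d}v_{\hn} - \lambda\int_{\hn}u^2\,\mathrm{d}v_{\hn} + \frac{\pi^2}{d^2}\int_{\hn}u^2\,\mathrm{d}v_{\hn} \geq \sum_{k=1}^M\int_{\hn}V_{\lambda,y_k}\phi_k^2\, u^2\,\mathrm{d}v_{\hn}.
\]
Finally, writing $\phi_k^2 = 1 - \sum_{j\neq k}\phi_j^2$ and noting that, on $\mathrm{supp}(\phi_j)$ with $j\neq k$, the distance $\mathrm{d}(x,y_k)$ is bounded below by a constant of order $d$, the hypothesis $\lambda\geq N-2$ ensures $C_N(\lambda)\geq 0$ and hence that each summand of $V_{\lambda,y_k}$ is monotone decreasing in $\mathrm{d}(\cdot,y_k)$, so that $V_{\lambda,y_k}(x) \leq K_N(\lambda,d)$ on these cross supports. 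Summing the resulting cross-term estimates yields the $(M+1) K_N(\lambda,d)\int u^2$ correction on the left of \eqref{eq_main_thm}.

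\textbf{Main obstacle.} The delicate point is the calibration in the last step: the support radii of the $\phi_k$ must be chosen so that the distance lower bounds producing $K_N(\lambda,d)$ are consistent with the IMS constant $\pi^2/d^2$ of Step 1, and particular care is needed on the portion of $\mathrm{supp}(\phi_0)$ inside a ball around a pole, where $V_{\lambda,y_k}$ is large but $\phi_0^2$ vanishes at $y_k$. The reason a finite $K_N(\lambda,d)$ eventually suffices is that the hyperbolic-specific summands $C_N/\sinh^2$ and $D_N g$ of $V_{\lambda,y_k}$ decay rapidly away from the pole; this is the mechanism, unavailable on $\rn$, by which the $L^2$-correction term of \eqref{m_euc_hardy} can be removed in the hyperbolic setting once the poles are sufficiently far apart.
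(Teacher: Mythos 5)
Your outline follows the paper's proof of Theorem \ref{main_thm} essentially step for step: the same $\sin(\pi t)$--type IMS partition subordinate to the balls $B(y_k,d)$, the unipolar inequality \eqref{unip} applied to each localized piece, the spectral--gap Poincar\'e inequality on the far piece, and triangle--inequality bounds for the cross terms (the paper routes the bookkeeping through Lemma \ref{lem2} and a two--pole case, Proposition \ref{sym2_thm}, before iterating, but the substance is the same).

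The one genuine gap is in your Step 2, exactly at the point you label the ``main obstacle'' and then resolve with the wrong mechanism. The claimed bound $V_{\lambda,y_k}(x)\le K_N(\lambda,d)$ is false on the portion of ${\rm supp}(\phi_0)$ contained in the annulus $d/2\le{\rm d}(x,y_k)<d$: there the Hardy summand $H_N(\lambda)/{\rm d}^2(x,y_k)$ reaches $4H_N(\lambda)/d^2$, four times the coefficient appearing in \eqref{Vd}. The $\sinh^{-2}$ and $g$ summands cause no trouble, since $K_N(\lambda,d)$ is already defined with their values at radius $d/2$; so the ``rapid decay of the hyperbolic--specific terms'' you invoke is irrelevant here --- the offending term is the same $1/{\rm d}^2$ singularity one has in $\rn$. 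What actually rescues the estimate is that on this annulus $V_{\lambda,y_k}$ enters multiplied by $1-\phi_k^2(x)=\cos^2\bigl(\pi\,{\rm d}(x,y_k)/d\bigr)$, which vanishes quadratically precisely at the inner radius where the Hardy term peaks. This cancellation must be quantified, and it is not automatic: $\max_{t\in[1/2,1]}t^{-2}\cos^2(\pi t)\approx 1.12>1$, so the single--pole product genuinely exceeds $H_N(\lambda)/d^2$. The paper's Lemma \ref{key_lemma} instead bounds the symmetrized two--pole quantity and verifies that $\max_{t\in[1/2,1]}\bigl(t^{-2}+(2-t)^{-2}\bigr)\cos^2(\pi t)=2$, attained at $t=1$, which is exactly what produces the coefficient $H_N(\lambda)/d^2$ per pole in $K_N(\lambda,d)$. (One could instead absorb the excess $0.12\,H_N(\lambda)/d^2$ into the slack of the $(M+1)K_N(\lambda,d)$ coefficient, but either way an explicit maximization over $t\in[1/2,1]$ is required; without it the constant in \eqref{eq_main_thm} is not justified.)
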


\medskip
 A few special cases have to be singled out. In fact, for $\lambda= N-2$ one has $H_N(N-2)=\frac{(N-2)^2}{4}$, $C_N(N-2)=0$, $D_N(N-2)=\frac{(N-3)(N-2)}{2}$, and Theorem \ref{main_thm} provides an interesting inequality that we state separately here below. Let $\bar d=\bar d(M,N)>0$ be defined implicitly as follows
  \begin{equation}\label{overd}
  \frac{\pi^2 }{\bar d^2}+(M+1)\left(\frac{(N-2)^2}{4} \frac{1}{\bar d^2}+ \frac{(N-3)(N-2)}{2} g(\bar d/2)\right)=N-2\,.
   \end{equation}
 Then we have the following result:
\begin{cor}\label{hardy_thm}
Let $N \geq 3$ and $M \geq 2$. Furthermore, let $y_1, \ldots y_M \in \hn$, $d>0$ as defined in \eqref{d} and $\bar d=\bar d(M,N)>0$ as defined in \eqref{overd}.  If $d\geq \bar d $, the following inequality holds

 \begin{equation}\label{hardy_multi}
  \begin{aligned}
 \int_{\hn} |\nabla_{\hn} u|^2 \, {\rm d}v_{\hn}
 &  \geq    \frac{(N-2)^2}{4}  \sum_{k = 1}^M  \int_{\hn}  \frac{u^2}{{\rm d^2}(x, y_k)} \, {\rm d}v_{\hn}\\
 &+ \frac{(N-2)(N-3)}{2}  \sum_{k = 1}^M  \int_{\hn} g({\rm d}(x, y_k))\,u^2 \, {\rm d}v_{\hn}
 \end{aligned}
 \end{equation}
 for all $u \in C_{c}^{\infty} (\hn)$ with the function $g$ as in \eqref{potential}.
 \end{cor}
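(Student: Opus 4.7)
The proof is essentially a direct specialization of Theorem \ref{main_thm} to the value $\lambda = N-2$, so my plan splits into two steps: first compute the constants explicitly, then exploit monotonicity in $d$ to discard the correction term on the left-hand side.

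First I would apply Theorem \ref{main_thm} with $\lambda = N-2$ and evaluate all the constants. Since $\gamma_N(N-2) = \sqrt{(N-1)^2 - 4(N-2)} = \sqrt{(N-3)^2} = N-3$ (as $N \geq 3$), one immediately obtains $H_N(N-2) = (N-2)^2/4$, $D_N(N-2) = (N-3)(N-2)/2$, and crucially $C_N(N-2) = (2N-4)\cdot 0/4 = 0$. Hence the middle $\sinh^{-2}$ term disappears from both $V_{N-2, y_k}$ and $K_N(N-2, d)$, and the inequality provided by Theorem~\ref{main_thm} reduces to
\begin{align*}
\int_{\hn} |\nabla_{\hn} u|^2 \, {\rm d}v_{\hn} &+ \Lambda(d)\int_{\hn} u^2 \, {\rm d}v_{\hn}\\
&\geq \frac{(N-2)^2}{4}\sum_{k=1}^M \int_{\hn} \frac{u^2}{{\rm d}^2(x,y_k)}\, {\rm d}v_{\hn} + \frac{(N-2)(N-3)}{2}\sum_{k=1}^M \int_{\hn} g({\rm d}(x,y_k))\, u^2 \, {\rm d}v_{\hn},
\end{align*}
where $\Lambda(d) := \frac{\pi^2}{d^2} + (M+1)\left(\frac{(N-2)^2}{4 d^2} + \frac{(N-3)(N-2)}{2} g(d/2)\right) - (N-2)$.

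The second step is to check that $\Lambda(d) \leq 0$ whenever $d \geq \bar d$, so that the $L^2$-correction term on the left can be dropped. This is the only real content, and it follows from monotonicity. The function $d \mapsto 1/d^2$ is strictly decreasing on $(0, +\infty)$, and the function $g$ is strictly decreasing (as recalled right after its definition in \eqref{potential}), hence so is $d \mapsto g(d/2)$. Therefore the map $d \mapsto \Lambda(d)$ is strictly decreasing on $(0,+\infty)$. Since $\bar d$ is defined in \eqref{overd} precisely by $\Lambda(\bar d) = 0$, we conclude that $\Lambda(d) \leq 0$ for every $d \geq \bar d$.

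Finally, for such $d$ the term $\Lambda(d)\int_{\hn} u^2 \, {\rm d}v_{\hn}$ is nonpositive, so dropping it from the left-hand side of the displayed inequality only strengthens it, yielding exactly \eqref{hardy_multi}. I do not expect any obstacle here; the nontrivial content has been absorbed into Theorem~\ref{main_thm}, and the only remaining point, the monotonicity of $\Lambda$, is transparent. One should just note in passing that the definition \eqref{overd} indeed determines $\bar d$ uniquely, again by the strict monotonicity (and the obvious behavior $\Lambda(0^+) = +\infty$, $\Lambda(+\infty) = -(N-2) < 0$).
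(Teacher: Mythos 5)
Your proof is correct and follows exactly the route the paper intends: specialize Theorem \ref{main_thm} to $\lambda=N-2$ (where $\gamma_N(N-2)=N-3$ gives $H_N=\tfrac{(N-2)^2}{4}$, $C_N=0$, $D_N=\tfrac{(N-3)(N-2)}{2}$), and then use the strict monotonicity of $d\mapsto 1/d^2$ and of $g$ to see that the $L^2$-correction coefficient is nonpositive for $d\geq\bar d$ and can be dropped. The paper treats this as immediate after computing the constants, so your added remark on the unique solvability of \eqref{overd} and the sign of the correction term is a welcome, if minor, elaboration.
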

The interest of inequality \eqref{hardy_multi} is clearly related to the fact that one recovers the unipolar Hardy potential with best constant at each pole, and further positive remainder term on the right hand side, the resulting inequality being locally stronger than \eqref{hardy_multi_crista1}. This turns out to be true under the assumption $d\geq \bar d $, namely the poles cannot be too close to each other. With a similar assumption, exploiting inequality \eqref{eq_main_thm} with $\lambda=\left(\frac{N-1}{2} \right)^2$, one can also derive a multipolar improvement of a Poincar\'e inequality with constant arbitrarily close to the optimal one. More precisely, for $N-2\leq  \lambda < \left(\frac{N-1}{2} \right)^2$, let $\bar d_{\lambda}=\bar d_{\lambda}(M,N)>0$ be defined implicitly as follows
  \begin{equation}\label{overdlambda}
  \frac{\pi^2 }{\bar d_{\lambda} }+(M+1)\left(\frac{1}{4} \frac{1}{\bar d_{\lambda}^2}+\frac{(N-1)(N-3)}{4}\  \frac{1}{\sinh^2 (\bar d_{\lambda}/2)}\right)=\left(\frac{N-1}{2} \right)^2-\lambda\,.
   \end{equation}
 Then the following result holds:
\begin{cor}\label{hardy_thm_lam}
Let $N \geq 3$ and $M \geq 2$. Furthermore, let $y_1, \ldots y_M \in \hn$, $d>0$ as defined in \eqref{d} and $\bar d_{\lambda}=\bar d_{\lambda}(M,N)>0$ as defined in \eqref{overdlambda}. Let $N-2\leq \lambda < \left(\frac{N-1}{2} \right)^2$, if $d\geq \bar d_{\lambda} $, the following inequality holds

  \begin{equation}\label{poin_multi}\begin{aligned}
 \int_{\hn} |\nabla_{\hn} u|^2 \, {\rm d}v_{\hn}
 &  \geq  \lambda   \int_{\hn}  u^2 \, {\rm d}v_{\hn} +   \frac{1}{4}  \sum_{k = 1}^M  \int_{\hn}  \frac{u^2}{{\rm d^2}(x, y_k)} \, {\rm d}v_{\hn}\\
 &+ \frac{(N-1)(N-3)}{4}  \sum_{k = 1}^M  \int_{\hn}  \frac{u^2}{ \sinh^2 ({\rm d}(x, y_k))}\, \, {\rm d}v_{\hn}
 \end{aligned} \end{equation}
 for all $u \in C_{c}^{\infty} (\hn)$.
 \end{cor}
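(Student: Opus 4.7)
The plan is a direct specialization of Theorem \ref{main_thm} to $\lambda = \lambda^{\ast} := \bigl(\tfrac{N-1}{2}\bigr)^2$, the bottom of the $L^2$-spectrum, followed by absorption of the resulting $L^2$-correction into a sub-optimal Poincar\'e term with constant $\lambda < \lambda^{\ast}$. First I would evaluate the constants \eqref{constants} at $\lambda = \lambda^{\ast}$: since $\gamma_N(\lambda^{\ast}) = 0$, one gets $H_N(\lambda^{\ast}) = \tfrac14$, $C_N(\lambda^{\ast}) = \tfrac{(N-1)(N-3)}{4}$, and $D_N(\lambda^{\ast}) = 0$, so that the $D_N g$-term drops out of $V_{\lambda^{\ast}, y_k}$ and the potential reduces exactly to the two weights present on the right-hand side of \eqref{poin_multi}. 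Similarly, $K_N(\lambda^{\ast}, d)$ becomes precisely the parenthesised expression appearing on the left of \eqref{overdlambda}.

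With this substitution, inequality \eqref{eq_main_thm} rearranges to
\[
\int_{\hn} |\nabla_{\hn} u|^2 \, {\rm d}v_{\hn} \geq \Bigl(\lambda^{\ast} - \frac{\pi^2}{d^2} - (M+1) K_N(\lambda^{\ast}, d)\Bigr) \int_{\hn} u^2 \, {\rm d}v_{\hn} + \sum_{k=1}^M \int_{\hn} V_{\lambda^{\ast}, y_k}\, u^2 \, {\rm d}v_{\hn}.
\]
The second ingredient is a monotonicity observation: the map $d \mapsto \frac{\pi^2}{d^2} + (M+1)K_N(\lambda^{\ast}, d)$ is continuous and strictly decreasing from $+\infty$ to $0$ on $(0, \infty)$, since both $d^{-2}$ and $\sinh^{-2}(d/2)$ are (recall $(N-1)(N-3)\geq 0$ for $N\geq 3$). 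Hence \eqref{overdlambda} uniquely determines a threshold $\bar d_{\lambda} \in (0,\infty)$ (exploiting $\lambda^{\ast} - \lambda > 0$), and the assumption $d \geq \bar d_{\lambda}$ yields $\frac{\pi^2}{d^2} + (M+1) K_N(\lambda^{\ast}, d) \leq \lambda^{\ast} - \lambda$. The coefficient of $\int u^2$ in the displayed inequality is then at least $\lambda$, and \eqref{poin_multi} follows.

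Since the corollary is essentially a reformulation of a single case of Theorem \ref{main_thm}, I do not expect any genuine obstacle: all the analytic content, including the IMS partition-of-unity localization and the appeal to the unipolar inequality \eqref{unip}, is already encapsulated in Theorem \ref{main_thm}. The only conceptual point worth highlighting is that the strict inequality $\lambda < \lambda^{\ast}$ in the hypothesis is exactly what renders $\bar d_{\lambda}$ finite; as $\lambda \uparrow \lambda^{\ast}$ the admissible threshold degenerates to $+\infty$, consistently with the fact that the sharp Poincar\'e constant cannot be retained alongside nontrivial multipolar Hardy terms by this method.
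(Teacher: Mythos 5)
Your proof is correct and follows exactly the route the paper intends: specializing Theorem \ref{main_thm} to $\lambda=\left(\frac{N-1}{2}\right)^2$, where $\gamma_N=0$ gives $H_N=\frac14$, $C_N=\frac{(N-1)(N-3)}{4}$, $D_N=0$, and then absorbing the $L^2$-correction into the Poincar\'e term via the definition of $\bar d_{\lambda}$ and the strict monotonicity of $d\mapsto \frac{\pi^2}{d^2}+(M+1)K_N(\lambda^{\ast},d)$. The only remark is that you (correctly) read the first term in \eqref{overdlambda} as $\pi^2/\bar d_{\lambda}^2$, consistent with \eqref{eq_main_thm} and \eqref{overd}, whereas the displayed formula contains a typographical $\pi^2/\bar d_{\lambda}$.
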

We note that the Hardy improvement of \eqref{poin_multi} is locally optimal for all $\lambda$ in the sense that
$$\frac{1}{4} \frac{1}{  {\rm d^2}(x, y_k)}+ \frac{(N-1)(N-3)}{4 } \frac{1}{ \sinh^2 ({\rm d}(x, y_k))}\sim \frac{(N-2)^2}{4}  \frac{1}{  {\rm d^2}(x, y_k)}\quad \text{as } x \rightarrow y_k\,,$$ namely the unipolar critical mass is reached at any pole. It is also to be noted that the constant 1/4 in the unipolar inequality
\[
 \int_{\hn} |\nabla_{\hn} u|^2 \, {\rm d}v_{\hn}
  \geq  \frac{(N-1)^2}{4}   \int_{\hn}  u^2 \, {\rm d}v_{\hn} +   \frac{1}{4}    \int_{\hn}  \frac{u^2}{{\rm d^2}(x, x_0)} \, {\rm d}v_{\hn}
\]
is known to be sharp by the results of \cite{BGG}.

\section{Proof of Theorem~\ref{main_thm}}\label{hnproof}

 \subsection{Technical lemmas}
 We adapt the approach of \cite[Section 1]{BDE} to our framework. Consider a partition of unity in $\hn$, namely a finite set $\{ J_k \}_{k = 1}^{M+1}$ of
  real valued functions $J_k \in W^{1, \infty}(\hn)$ such that $
 \sum_{k = 1}^{M + 1} J_k^2 = 1$ a.e. in $\hn$. Hence, $J_{M + 1} :=
\sqrt{1 - \sum_{k = 1}^{M} J_k^2}$ and, by writing Riemannian gradient in local coordinates, i.e., $(\nabla_{\hn})^j = g^{i j} \partial_i$, one has $\sum_{k = 1}^{M + 1} J_k (\nabla_{\hn})^jJ_k = 0$ for every $j=1,...,N$. Furthermore, if one requires that
\begin{equation}\label{parti_assump}
{\rm Int} ({\rm supp}(J_k)) \cap {\rm Int} ({\rm supp}(J_l))= \emptyset \quad \mbox{for any} \ k, l = 1, \ldots, M, k \neq l\,,
\end{equation}
 it follows that
 \begin{equation}\label{alternate_sum}
 \sum_{k = 1}^{M + 1} |\nabla_{\hn} J_k|^2 = \sum_{k = 1}^{M} \frac{|\nabla_{\hn} J_k|^2}{1 - J_k^2} \quad \text{a.e. in } \hn.
 \end{equation}
 First we state

\begin{lem}\label{lem2}
Let $\{ J_k \}_{k = 1}^{M + 1}$ be a partition of unity satisfying condition \eqref{parti_assump}. For any $u \in H^{1}(\hn)$ and potential $V \in L^1_{loc} (\hn),$ there holds
\begin{align*}
\langle Lu, u \rangle   := \int_{\hn} |\nabla_{\hn} u|^2 \, {\rm d}v_{\hn} -  \int_{\hn} V(x) u^2 \, {\rm d}v_{\hn} = \sum_{k = 1}^{M} \langle L(J_k u), (J_k u) \rangle + R_M\,,
\end{align*}
with
\begin{align*}
R_M & = \int_{\hn}  |\nabla_{\hn} (J_{M + 1} u)|^2 -  \int_{\hn \setminus\Omega} V(x)\, u^2 \, {\rm d}v_{\hn}\\
& - \sum_{k = 1}^{M} \int_{\Omega_k} \left( \frac{|\nabla_{\hn} J_k|^2}{1 - J_k^2(x)} +  (1 - J_k^2(x)) V(x) \right) \, u^2 \, {\rm d}v_{\hn}\,,
\end{align*}
where  $\Omega_k:={\rm Int}({\rm supp}(J_k))$ and $\Omega:= \cup_{k = 1}^{M} {\rm supp}(J_k)$.
\end{lem}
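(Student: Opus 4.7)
The plan is to derive the decomposition from the classical IMS localization identity and then carefully rewrite the potential term using the disjoint-support condition \eqref{parti_assump}.

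First I would establish the pointwise IMS identity: expanding $|\nabla_{\hn}(J_k u)|^2 = J_k^2 |\nabla_{\hn} u|^2 + 2 J_k u \, \langle \nabla_{\hn} J_k, \nabla_{\hn} u\rangle + |\nabla_{\hn} J_k|^2 u^2$ and summing over $k=1,\dots,M+1$, the cross term vanishes since $\sum_k J_k \nabla_{\hn} J_k = \tfrac{1}{2}\nabla_{\hn}(\sum_k J_k^2)=0$, and $\sum_k J_k^2 = 1$, so
\begin{equation*}
|\nabla_{\hn} u|^2 = \sum_{k=1}^{M+1} |\nabla_{\hn}(J_k u)|^2 - \sum_{k=1}^{M+1} |\nabla_{\hn} J_k|^2 \, u^2 \quad \text{a.e.\ in } \hn.
\end{equation*}
For the potential term, I would similarly insert the partition: $V u^2 = \sum_{k=1}^{M+1} V J_k^2 u^2$.

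Next I would integrate both identities and split off the contribution with $k=M+1$ from the first sum. This gives
\begin{equation*}
\langle Lu,u\rangle = \sum_{k=1}^{M}\langle L(J_k u), J_k u\rangle + \int_{\hn}|\nabla_{\hn}(J_{M+1}u)|^2\,\dvh - \int_{\hn}|\nabla_{\hn} J_{M+1}|^2 u^2\,\dvh - \sum_{k=1}^{M}\int_{\hn}|\nabla_{\hn} J_k|^2 u^2\,\dvh - \int_{\hn} V J_{M+1}^2 u^2\,\dvh.
\end{equation*}
I would then apply \eqref{alternate_sum} to collapse the gradient sum into $\sum_{k=1}^{M}\int\frac{|\nabla_{\hn} J_k|^2}{1-J_k^2} u^2\,\dvh$, and, since $|\nabla_{\hn} J_k|$ vanishes a.e.\ outside $\overline{\Omega_k}$, restrict each integral to $\Omega_k$.

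The last bookkeeping step is to split $\int_{\hn} V J_{M+1}^2 u^2\,\dvh$ using \eqref{parti_assump}: on $\Omega_k$ all $J_l$ with $l\leq M$, $l\neq k$, vanish, hence $J_{M+1}^2 = 1 - J_k^2$ on $\Omega_k$; on $\hn\setminus\Omega$ all $J_k$ with $k\leq M$ vanish, so $J_{M+1}^2 = 1$ there. Since $\hn = (\hn\setminus\Omega)\cup\bigcup_{k=1}^{M}\Omega_k$ up to a set of measure zero, this yields
\begin{equation*}
\int_{\hn} V J_{M+1}^2 u^2 \,\dvh = \int_{\hn\setminus\Omega} V u^2 \,\dvh + \sum_{k=1}^{M}\int_{\Omega_k}(1-J_k^2) V u^2 \,\dvh.
\end{equation*}
Substituting this into the previous display produces exactly the claimed formula for $R_M$, completing the proof.

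There is no real obstacle here: the content is essentially the IMS truncation identity combined with the algebraic observation that on each $\Omega_k$ only two members of the partition are active. The only point needing a bit of care is the justification of restricting integrals to the $\Omega_k$'s and using the measure-zero decomposition of $\hn$, which is standard given the $W^{1,\infty}$ regularity of the $J_k$'s and the local integrability of $V$.
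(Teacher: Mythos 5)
Your proposal is correct and follows essentially the same route as the paper: the pointwise IMS expansion of $|\nabla_{\hn}(J_k u)|^2$ with the vanishing cross term, splitting off the $k=M+1$ contribution, invoking \eqref{alternate_sum}, and then using \eqref{parti_assump} to decompose the potential term over $\hn\setminus\Omega$ and the $\Omega_k$'s. The only difference is that you carry out the final bookkeeping for $\int_{\hn} V J_{M+1}^2 u^2\,\dvh$ slightly more explicitly than the paper does, which is harmless.
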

\begin{proof}
Using the fact that
$$
\left|\nabla_{\hn} (J_k u) \right|^2 = \left|u (\nabla_{\hn} J_k) + J_k (\nabla_{\hn} u) \right|^2 =$$
$$ u^2 |\nabla_{\hn} J_k|^2 + J_k^2 |\nabla_{\hn} u|^2 +
 (J_k \nabla_{\hn} J_k\,,\nabla_{\hn}(u^2))_{\hn}
$$

and exploiting the properties of the $J_k$ listed above, one has that
\begin{align*}
   \langle Lu, u \rangle  &= \sum_{k = 1}^{M + 1} \int_{\hn}
   \langle L(J_k u), (J_k u) \rangle  - \int_{\hn} \left( \sum_{k = 1}^{M + 1} |\nabla_{\hn} J_k|^2 \right) u^2 \,  {\rm d}v_{\hn}\\
   & = \sum_{k = 1}^{M} \langle L(J_k u), (J_k u) \rangle + \int_{\hn} \left( |\nabla_{\hn} (J_{M + 1} u)|^2 -  \, V(x) (J_{M + 1} u)^2 \right) \, {\rm d}v_{\hn} \\
   &  - \int_{\hn} \left( \sum_{k = 1}^{M + 1} |\nabla_{\hn} J_k|^2 \right) u^2 \,  {\rm d}v_{\hn} =: \sum_{k = 1}^{M} \langle L(J_k u), (J_k u) \rangle + R_M\,.
\end{align*}
Note that, by \eqref{alternate_sum}, $R_M$ can rewritten as

\begin{align*}
R_M  &  =  \int_{\hn}  |\nabla_{\hn} (J_{M + 1} u)|^2  -  \int_{\hn} V(x) \left(1 - \sum_{k = 1}^{M} J_k^2(x) \right) u^2 \, {\rm d}v_{\hn}\\
   &   -   \sum_{k = 1}^{M } \int_{\hn}  \frac{|\nabla_{\hn} J_k|^2}{1 - J_k^2} \, u^2 \,  {\rm d}v_{\hn}
\end{align*}
and the statement of Lemma \ref{lem2} follows by recalling \eqref{parti_assump} and writing $\hn=(\hn \setminus \cup_{k = 1}^{M} {\rm supp}(J_k) )\cup (\cup_{k = 1}^{M} {\rm supp}(J_k))$.
\end{proof}

 \medskip

Next we denote by $B(x_0, r)$ the geodesic ball of center $x_0\in \hn$ and radius $r>0$ and we prove:

\begin{lem}\label{key_lemma}
 Let $y_1,y_2\in \hn$ with $ 0< {\rm d}(y_1, y_2)=:2d$. There is a partition of unity $\{ J_k \}_{k = 1}^{3}$ with $J_i \equiv 1$
on $B(y_i, \frac{d}{2})$ and $J_i \equiv 0$ on $B(y_i, d)^c,$ for $i=1,2$. Furthermore, for every $N-2 \leq \lambda\leq (\frac{N-1}{2})^2$ and for a.e. $x\in B(y_1, d) \cup B(y_2, d)$,
 there holds
 $$\sum_{k = 1}^{2} \frac{|\nabla_{\hn} J_k(x)|^2}{1 - J_k^2(x)} +  (1-J_1^2(x)-J_2^2(x))\, V_{\lambda,y_1, y_2 }(x) \leq  \frac{\pi^2}{d^2}+2K_{N}(\lambda,d)\,,
 $$
where $V_{\lambda,y_1, y_2 }(x) $ is as defined in \eqref{non_sym_pot} and $K_{N}(\lambda,d)$ is as given in \eqref{Vd}.
\end{lem}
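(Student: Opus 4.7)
The plan is to build $J_1, J_2$ as explicit radial cosine cutoffs centered at the two poles and then verify the pointwise estimate by a short case analysis that reduces to one elementary trigonometric inequality.

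Concretely, I would take $J_i(x) := \eta({\rm d}(x, y_i))$ for $i=1,2$, where $\eta:[0,\infty)\to[0,1]$ is
\[
\eta(r) := \begin{cases} 1, & 0 \leq r \leq d/2, \\ \cos\!\bigl(\tfrac{\pi}{d}(r - d/2)\bigr), & d/2 \leq r \leq d, \\ 0, & r \geq d, \end{cases}
\]
and set $J_3 := \sqrt{1 - J_1^2 - J_2^2}$. Because ${\rm d}(y_1, y_2) = 2d$, the open balls $B(y_i, d)$ are disjoint, so $J_1 J_2 \equiv 0$ and condition \eqref{parti_assump} holds. Writing $r_i := {\rm d}(x, y_i)$ and using $|\nabla_{\hn} {\rm d}(\cdot, y_i)| \equiv 1$ gives the clean identity
\[
|\nabla_{\hn} J_i|^2 = (\eta'(r_i))^2 = \frac{\pi^2}{d^2}\bigl(1 - J_i^2\bigr) \quad \text{on } B(y_i, d)\setminus \overline{B(y_i, d/2)},
\]
so $|\nabla_{\hn} J_i|^2/(1 - J_i^2) \equiv \pi^2/d^2$ on this annulus and vanishes on $B(y_i, d/2)$.

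I would then split $B(y_1, d) \cup B(y_2, d)$ into the interior balls $B(y_i, d/2)$ and the two annuli $A_i := B(y_i, d) \setminus \overline{B(y_i, d/2)}$. On each $B(y_i, d/2)$ the bound is trivial, since $J_i \equiv 1$, $J_{3-i} \equiv 0$, and both the gradient sum and the factor $1 - J_1^2 - J_2^2$ are zero. The only substantive case is $x \in A_1$ (the case $A_2$ being symmetric): there $J_2 \equiv 0$, the gradient sum equals $\pi^2/d^2$, and $1 - J_1^2 - J_2^2 = \sin^2 s$ with $s := \tfrac{\pi}{d}(r_1 - d/2) \in [0, \pi/2]$. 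The target therefore reduces to
\[
\sin^2(s)\bigl[V_{\lambda, y_1}(x) + V_{\lambda, y_2}(x)\bigr] \leq 2\, K_N(\lambda, d).
\]
The assumption $N-2 \leq \lambda \leq ((N-1)/2)^2$ makes $C_N, D_N \geq 0$; the triangle inequality gives $r_2 \geq 2d - r_1 \geq d \geq d/2$; and combining $\sin^2 s \leq 1$ with the monotonicity (decreasing) of $\sinh^{-2}$ and $g$ bounds the $C_N/\sinh^2$ and $D_N g$ contributions by $2 C_N/\sinh^2(d/2)$ and $2 D_N g(d/2)$, respectively.

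The main obstacle is the Hardy contribution, where I must prove
\[
\sin^2(s)\Bigl[\frac{1}{r_1^2} + \frac{1}{r_2^2}\Bigr] \leq \frac{2}{d^2}.
\]
Since $r_2 \geq 2d - r_1$, it suffices to treat the extremal ``geodesic'' case $r_2 = 2d - r_1$. After the substitution $t := r_1/d \in [1/2, 1]$, which makes $\sin^2 s = \cos^2(\pi t)$, and then $u := 1 - t \in [0, 1/2]$, using $1/(1-u)^2 + 1/(1+u)^2 = 2(1+u^2)/(1-u^2)^2$, the claim is algebraically equivalent to
\[
\sin^2(\pi u)(1 + u^2) \geq u^2(3 - u^2), \qquad u \in [0, 1/2].
\]
The Jordan-type estimate $\sin(\pi u) \geq 2u$ on $[0, 1/2]$ gives $\sin^2(\pi u)(1 + u^2) \geq 4u^2(1 + u^2)$, and the inequality $4u^2(1 + u^2) \geq u^2(3 - u^2)$ is the trivial $u^2 + 5u^4 \geq 0$. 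Summing the three contributions yields $2K_N(\lambda, d)$, and adding the gradient term $\pi^2/d^2$ concludes the proof.
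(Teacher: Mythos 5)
Your proof is correct and follows essentially the same route as the paper: your cosine cutoff $\eta(r)=\cos\bigl(\tfrac{\pi}{d}(r-d/2)\bigr)$ is exactly the paper's profile $J(t)=\sin(\pi t)$ on $[1/2,1]$ reparametrized, and the eikonal identity, the triangle-inequality bound $r_2\ge 2d-r_1$, and the term-by-term monotonicity estimates for the $C_N$ and $D_N$ contributions all coincide with the paper's argument. The only point where you go beyond the paper is in explicitly verifying the elementary inequality $\bigl(\tfrac{1}{t^2}+\tfrac{1}{(2-t)^2}\bigr)\cos^2(\pi t)\le 2$ on $[1/2,1]$ via the Jordan inequality, a step the paper leaves implicit in its concluding ``$\max\ldots=2K_N(\lambda,d)$'' computation.
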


\begin{proof}
Let $J:[0,+\infty) \rightarrow [0,1]$ be a continuous map defined as follows
\[J(t)=\left\{\begin{array}{ll}
1 & \text{if $0\leq t \leq \frac{1}{2}$\,}
\\
{\rm sin} (\pi t) & \text{if $\frac{1}{2} \leq t \leq 1$\,}
\\
0 & \text{if $ t \geq 1$\,.}
\end{array}
\right.
\]
For all $x\in \hn,$  we set $J_1 (x) : = J \left( \frac{{ \rm d}(x, y_1)}{d} \right),$ and $J_2 (x) : = J \left( \frac{{ \rm d}(x, y_2)}{d} \right)$,
and $J_3(x) := \sqrt{1 - J_1^2(x) - J^2_2(x)}$.  It is readily seen that $J_i \in W^{1, \infty}(\hn)$ for $i=1,...,3$. Then, we consider
\begin{align*}
\sup_{x\in B(y_1, d)}  \left[ \frac{|\nabla_{\hn} J_1(x)|^2}{1 - J_1^2(x)} +  (1 - J_1^2(x))  \, \left( V_{\lambda,y_1}(x)+V_{\lambda,y_2}(x)\right) \right].
\end{align*}
Since for $y_1\in \hn$ fixed, we have $|\nabla_{\hn} { \rm d}(x, y_1)|^2=1$, for a.e. $x\in B(y_1, d)$ we infer
\begin{align*}
|\nabla_{\hn} J_1(x)|^2 = \frac{1}{d^2} \left| J^{\prime} \left( \frac{ { \rm d}(x, y_1)}{ d} \right) \right|^2 |\nabla_{\hn} { \rm d}(x, y_1)|^2=\frac{1}{d^2} \left| J^{\prime} \left(t\right) \right|^2\ \quad \text{with }  t :=\frac{ { \rm d}(x, y_1)}{ d}\in [0,1)\,
\end{align*}
and, using the fact that
\begin{equation*}
{\rm d}(x, y_2)  \geq {\rm d}(y_1, y_2) -{\rm d}(x, y_1) = (2-t)d\,,
\end{equation*}
 we obtain
\begin{align*}
 V_{\lambda,y_1}(x)+V_{\lambda,y_2}(x)& \leq \frac{H_N(\lambda)}{d^2}\left[\frac{1}{t^2} + \frac{1}{(2-t)^2}\right]+C_N(\lambda)\left[\frac{1}{\sinh^2(td)}
 \,+ \,\frac{1}{\sinh^2((2-t)d)}\right]\\
& +D_N(\lambda)\left[ g(td)+g((2-t)d) \right]=:W_{\lambda}(t,d)\,.
\end{align*}
From the above computations we deduce
\begin{align*}
& \sup_{x\in B(y_1, d)}   \left[ \frac{|\nabla_{\hn} J_1(x)|^2}{1 - J_1^2(x)} +  (1 - J_1^2(x))  \,\left( V_{\lambda,y_1}(x)+V_{\lambda,y_2}(x)\right) \right] \\
& \leq \sup_{t \in (\frac{1}{2}, 1)} \left[  \frac{(J^{\prime}(t))^2}{d^2(1 - J^2(t))} + (1 - J^2(t))\,W_{\lambda}(t,d) \right]\\
&\leq \frac{\pi^2 }{d^2}+\max_{t \in [\frac{1}{2}, 1]} \left[ \frac{H_N(\lambda)}{d^2}\left[\frac{1}{t^2} + \frac{1}{(2-t)^2}\right]\, \cos^{2}(\pi t) \right] \\
&+C_N(\lambda)\frac{2}{\sinh^2(d/2)} + 2D_N(\lambda)\,g(d/2)=\frac{\pi^2}{d^2}+2K_{N}(\lambda,d)\,,
   \end{align*}
 where $K_{N}(\lambda,d)$ is given in \eqref{Vd}. Similar computations, inverting the roles of $y_1$ and $y_2$, give
 \begin{align*}
 \sup_{x\in B(y_2, d)}   \left[ \frac{|\nabla_{\hn} J_2(x)|^2}{1 - J_2^2(x)} +  (1 - J_2^2(x))  \,\left( V_{\lambda,y_1}(x)+V_{\lambda,y_2}(x)\right) \right]\leq \frac{\pi^2}{d^2}+2K_{N}(\lambda,d)\,.
\end{align*}
This completes the proof.
\end{proof}


  \subsection{Hardy inequality with two singularities}
 This section is devoted to prove Theorem~\ref{main_thm} in the case of two singularities. More precisely, we shall prove the following result.
 \begin{prop}\label{sym2_thm}

Let $N \geq 3,$ Let $y_1,y_2\in \hn$ with $ 0< {\rm d}(y_1, y_2)=:2d$. For all $ N-2\leq \lambda \leq  \left(\frac{N-1}{2} \right)^2$
 the following inequality
 \begin{equation*}
\int_{\hn} |\nabla_{\hn} u|^2 \, {\rm d}v_{\hn}  +\left( \frac{\pi^2}{d^2}+3K_{N}(\lambda,d) -\lambda \right)\int_{\hn } u^2\, {\rm d}v_{\hn} \geq  \int_{\hn}  V_{\lambda,y_1, y_2 }(x)\, u^2 \, {\rm d}v_{\hn}
  \end{equation*}
holds for all $u \in C_{c}^{\infty} (\hn)$, where  $K_{N}(\lambda,d)$ is given in \eqref{Vd} and $V_{\lambda,y_1, y_2 }$ is given in \eqref{non_sym_pot}.
\end{prop}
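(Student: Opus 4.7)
The plan is to combine the IMS decomposition of Lemma~\ref{lem2} with the partition of unity from Lemma~\ref{key_lemma}, the unipolar Poincar\'e--Hardy inequality (Proposition~\ref{improved-hardy}) applied separately at each of the two poles, and the spectral bound $\lambda_1(\hn) = \left(\frac{N-1}{2}\right)^2 \geq \la$ to close the argument for the $J_3$ piece of the remainder.

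First I would take $L := -\De_{\hn} - V_{\la, y_1, y_2}$ and apply Lemma~\ref{lem2} with the partition $\{J_1, J_2, J_3\}$ of Lemma~\ref{key_lemma}, obtaining
\[
\langle L u, u\rangle \ =\ \sum_{k=1}^2 \langle L(J_k u), J_k u\rangle + R_2.
\]
For each $k\in\{1,2\}$, writing $k' \neq k$ and applying Proposition~\ref{improved-hardy} at the pole $y_k$ to the test function $J_k u$ gives $\int |\nabla_{\hn}(J_k u)|^2 \geq \la \int (J_k u)^2 + \int V_{\la, y_k}(J_k u)^2$, whence after subtracting $\int V_{\la, y_1, y_2}(J_k u)^2$ (the $V_{\la,y_k}$ term cancels)
\[
\langle L(J_k u), J_k u\rangle \ \geq\ \la \int_{\hn} J_k^2 u^2 \dvh - \int_{\hn} V_{\la, y_{k'}}\, J_k^2 u^2 \dvh.
\]
Because ${\rm supp}(J_k) \subset \overline{B(y_k, d)}$ and ${\rm d}(y_1, y_2) = 2d$, the triangle inequality forces ${\rm d}(x, y_{k'}) \geq d$ on this support; the monotonicity of each of the three summands of $V_{\la, y_{k'}}$ (using $C_N(\la), D_N(\la) \geq 0$ in the range $N-2 \leq \la \leq \left(\frac{N-1}{2}\right)^2$) gives $V_{\la, y_{k'}}(x) \leq K_N(\la, d)$ there. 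Hence $\langle L(J_k u), J_k u\rangle \geq (\la - K_N) \int J_k^2 u^2 \dvh$.

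Next I would lower bound $R_2$. On $\hn \setminus \Omega$ both distances exceed $d$, so $V_{\la, y_1, y_2} \leq 2K_N$; on each $\Omega_k$ the integrand in $R_2$ is exactly the quantity estimated in Lemma~\ref{key_lemma}, hence bounded by $\pi^2/d^2 + 2K_N$; and $\int |\nabla_{\hn}(J_3 u)|^2 \geq \lambda_1(\hn) \int (J_3 u)^2 \geq \la \int J_3^2 u^2$ by the Poincar\'e inequality. Together these produce $R_2 \geq \la \int J_3^2 u^2 \dvh - (\pi^2/d^2 + 2K_N) \int u^2 \dvh$. Using $J_1^2 + J_2^2 + J_3^2 = 1$ to telescope,
\[
(\la - K_N) \int (J_1^2 + J_2^2) u^2 \dvh + \la \int J_3^2 u^2 \dvh \ \geq\ (\la - K_N) \int u^2 \dvh,
\]
so that $\int |\nabla_{\hn} u|^2 \dvh - \int V_{\la, y_1, y_2} u^2 \dvh \geq (\la - \pi^2/d^2 - 3K_N) \int u^2 \dvh$, which rearranges to the stated inequality.

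The main obstacle is the precise bookkeeping that produces exactly the coefficient $3K_N(\la, d)$: one $K_N$ arises from bounding the cross pole $V_{\la, y_{k'}}$ over ${\rm supp}(J_k)$, while the other $2K_N$ is furnished by Lemma~\ref{key_lemma}'s estimate on $\Omega$. A secondary subtlety is that the $J_3$ contribution to $R_2$ cannot simply be discarded and must be recovered via the Poincar\'e inequality; this is the only step at which the hypothesis $\la \leq \left(\frac{N-1}{2}\right)^2$ is actually used.
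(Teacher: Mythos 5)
Your proposal is correct and follows essentially the same route as the paper's proof: the IMS decomposition of Lemma~\ref{lem2} with the partition from Lemma~\ref{key_lemma}, the unipolar inequality \eqref{unip} applied at each pole, the Poincar\'e inequality for the $J_3$ remainder, and the same bookkeeping yielding the coefficient $3K_N(\lambda,d)$. The only cosmetic difference is that the paper bounds the cross-pole contribution by the intermediate quantity $V_{\lambda}(d)$ of \eqref{Vdd} and then uses $V_{\lambda}(d)<K_N(\lambda,d)$, whereas you absorb it into $K_N(\lambda,d)$ directly.
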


 \begin{proof}
Consider the partition of unity given by Lemma \ref{key_lemma}, from Lemma \ref{lem2} with $V=V_{\lambda,y_1, y_2 }$ we have
 \begin{align*}
 \langle Lu, u \rangle  & := \int_{\hn} |\nabla_{\hn} u|^2 \, {\rm d}v_{\hn} -  \int_{\hn}  V_{\lambda,y_1, y_2 }(x)\, u^2 \, {\rm d}v_{\hn}= \sum_{k = 1}^{2} \langle L(J_k u), (J_k u) \rangle + R_2\,,
\end{align*}
where
 \begin{align*}
R_2 & = \int_{\hn}  |\nabla_{\hn} (J_{3} u)|^2 \, {\rm d}v_{\hn} \, -  \int_{\hn \setminus  \Omega}  V_{\lambda,y_1, y_2 }(x)\, u^2 \, {\rm d}v_{\hn} \\
& - \sum_{k = 1}^{2} \int_{B(y_k, d) } \left( \frac{|\nabla_{\hn} J_k|^2}{1 - J_k^2(x)} +  (1 - J_k^2(x))  V_{\lambda,y_1, y_2 }(x) \right) \, u^2 \, {\rm d}v_{\hn}\,,
\end{align*}
and $\Omega=\overline{B(y_1, d)} \cup \overline{B(y_2, d)}$.
In particular if $x\in B(y_1,d)$, since ${\rm d}(x, y_2)> d$ and recalling \eqref{potential} and \eqref{non_sym_pot}, we have
$$V_{\lambda,y_1, y_2 }(x)\leq V_{\lambda,y_1}(x)+V_{\lambda}(d)\,,$$
where
 \begin{equation} \label{Vdd}
V_{\lambda}(d):= \left(H_N(\lambda)\, \frac{1}{ d^2}+ C_N(\lambda)  \frac{1}{\sinh^2 (d)}+ D_N(\lambda) g(d)\right)\,.
  \end{equation}
A similar argument works if $x\in B(y_2,d)$.  Using this fact and exploiting in each $B(y_k, d)$ inequality \eqref{unip} with $x_0=y_k$, for all $N-2 \leq \lambda\leq \left(\frac{N-1}{2}\right)^2$, we get
\begin{equation*}
\sum_{k= 1}^2 \langle L(J_k u), (J_k u) \rangle \geq \lambda \int_{\Omega} ((J_1 u)^2+(J_2 u)^2)\, {\rm d}v_{\hn} \, -\,  V_{\lambda}(d) \int_{\Omega} ((J_1 u)^2+(J_2 u)^2)\,
{\rm d}v_{\hn}\,.
\end{equation*}
Concerning the estimate of $R_2$, using the Poincar\'e inequality (without remainder terms), the estimate $V_{\lambda,y_1, y_2 }(x)\leq 2 V_{\lambda}(d)$ in $\hn \setminus  \overline{\Omega}$, and Lemma \ref{key_lemma},  we have
 \begin{align*}
R_2 & \geq \int_{\hn}  |\nabla_{\hn} (J_{3} u)|^2 \, {\rm d}v_{\hn} -  \int_{\hn \setminus  \overline{\Omega}}  V_{\lambda,y_1, y_2 }(x)\, (J_3 u)^2 \, {\rm d}v_{\hn}\\
& - \sum_{k = 1}^{2} \int_{B(y_k, d)} \left( \frac{\pi^2}{d^2}+2K_{N}(\lambda,d) \right) \, u^2 \, {\rm d}v_{\hn}
\\
&\geq \lambda \int_{\hn}   (J_{3} u)^2 \, {\rm d}v_{\hn} - 2V_{\lambda}(d) \int_{\hn \setminus  \overline{\Omega}} (J_3 u)^2\, {\rm d}v_{\hn} \\
& - \sum_{k = 1}^{2} \int_{B(y_k, d)} \left( \frac{\pi^2}{d^2}+2K_{N}(\lambda,d) \right) \, u^2 \, {\rm d}v_{\hn}\\
&\geq \lambda \int_{\hn}   (J_{3} u)^2 \, {\rm d}v_{\hn} -\left( \frac{\pi^2}{d^2}+2 K_{N}(\lambda,d) \right)\int_{\hn } u^2\, {\rm d}v_{\hn} \,,\end{align*}
where in the last step we exploit the fact that $(J_3 u)^2=u^2$ in $\hn \setminus  \overline{\Omega}$ and that $K_{N}(\lambda,d)>V_{\lambda}(d)$ for all $d>0$ and $N-2 \leq \lambda\leq \left(\frac{N-1}{2}\right)^2$. Summarising, we get
\begin{align*}
 \langle Lu, u \rangle &\geq \lambda \int_{\Omega} ((J_1 u)^2+(J_2 u)^2)\, {\rm d}v_{\hn}+\lambda \int_{\hn}   (J_{3} u)^2 \, {\rm d}v_{\hn} \\
 & -\,  V_{\lambda}(d) \int_{\Omega} ((J_1 u)^2+(J_2 u)^2)  \, {\rm d}v_{\hn}-\left( \frac{\pi^2}{d^2}+2K_{N}(\lambda,d) \right)\int_{\hn } u^2\, {\rm d}v_{\hn} \\
 &\geq \lambda \int_{\hn} u^2\, {\rm d}v_{\hn}-\,  K_{N}(\lambda,d) \int_{\hn} ((J_1 u)^2+(J_2 u)^2+(J_3 u)^2)  \, {\rm d}v_{\hn}\\
 &-\left( \frac{\pi^2}{d^2}+2K_{N}(\lambda,d) \right)\int_{\hn } u^2\, {\rm d}v_{\hn} \\
&\geq\lambda \int_{\hn} u^2\, {\rm d}v_{\hn}-\left( \frac{\pi^2}{d^2}+3K_{N}(\lambda,d)\right)\int_{\hn } u^2\, {\rm d}v_{\hn} \,.
\end{align*}

\end{proof}

 \medskip

 \subsection{Proof of Theorem~\ref{main_thm}}
The proof relies on Proposition~\ref{sym2_thm} for the case of two singularities. Let $\{ J_k \}_{k = 1}^{M + 1}$ be a partition of unity satisfying condition \eqref{parti_assump} and such that $J_k (x) : = J \left( \frac{{ \rm d}(x, y_k)}{d} \right)$ for all $x\in \hn$ and $k=1,...,M$, with $J$ as defined in the proof of Lemma \ref{key_lemma}. Hence, ${\rm Int}({\rm supp}(J_k))=B(y_k,d)$ for $k=1,...,M$. From Lemma \ref{lem2} we know that
\begin{align*}
\langle Lu, u \rangle   := \int_{\hn} |\nabla_{\hn} u|^2 \, {\rm d}v_{\hn} -  \int_{\hn}  V_{\lambda,y_1, \ldots y_M }(x)\,u^2 \, {\rm d}v_{\hn} = \sum_{k = 1}^{M} \langle L(J_k u), (J_k u) \rangle + R_M\,,
\end{align*}
with
\begin{align*}
R_M & = \int_{\hn}  |\nabla_{\hn} (J_{M + 1} u)|^2 -  \int_{\hn \setminus  \Omega}  V_{\lambda,y_1, \ldots y_M }(x) \,u^2 \, {\rm d}v_{\hn}\\
& - \sum_{k = 1}^{M} \int_{B(y_k,d)} \left( \frac{|\nabla_{\hn} J_k|^2}{1 - J_k^2(x)} +  (1 - J_k^2(x))  V_{\lambda,y_1, \ldots y_M }(x) \right) \, u^2 \, {\rm d}v_{\hn}\,,
\end{align*}
where $\Omega:= \cup_{k = 1}^{M} \overline{B(y_k,d)}$. Next, using ${\rm d}(x, y_l) > d$ for all  $x \in B(y_k,d)$ and all $l \neq k,$ we estimate
 $$V_{\lambda,y_1, \ldots y_M }(x)\leq V_{\lambda,y_k }(x)+(M-1)K_{N}(\lambda,d)\quad \text{ for all }x\in B(y_k,d)$$
and, exploiting  in each $B(y_k, d)$ inequality \eqref{unip} with $x_0=y_k$, for all $N-2 \leq \lambda\leq \left(\frac{N-1}{2}\right)^2$ we obtain
\begin{equation}\label{mod_Lk_estimate}\begin{aligned}
&\sum_{k= 1}^M \langle L(J_k u), (J_k u) \rangle\\ &= \sum_{k= 1}^M\left( \int_{B(y_k,d)} |\nabla_{\hn} J_k u|^2 \, {\rm d}v_{\hn} -  \int_{B(y_k,d)}  V_{\lambda,y_1, \ldots y_M }(x) (J_k u)^2 \, {\rm d}v_{\hn} \right)\\
&\geq \lambda \int_{\Omega} \sum_{k = 1}^{M} \, (J_k u)^2\, {\rm d}v_{\hn} \, -\,  (M-1)K_{N}(\lambda,d) \int_{\Omega} \sum_{k = 1}^{M} \, (J_k u)^2\,
{\rm d}v_{\hn} \,.
\end{aligned} \end{equation}
 Concerning the estimate of $R_M$,  we exploit the Poincar\'e inequality (without remainder terms) and the fact ${\rm d}(x, y_k) \geq d$ for all  $x \in \hn \setminus \Omega$ to deduce  that
\begin{align*}
R_M & \geq \lambda \int_{\hn}  (J_{M + 1} u)^2 - M V_{\lambda}(d)  \int_{\hn \setminus  \Omega} \, u^2 \, {\rm d}v_{\hn}\\
& - \sum_{k = 1}^{M} \int_{B(y_k,d)} \left( \frac{|\nabla_{\hn} J_k|^2}{1 - J_k^2(x)} +  (1 - J_k^2(x))  V_{\lambda,y_1, \ldots y_M }(x) \right) \, u^2 \, {\rm d}v_{\hn}\,,
\end{align*}
with $V_{\lambda}(d)$ as given in \eqref{Vdd}. Consider the last quantity in the r.h.s. above we notice that, since $J_l(x)=0$ for $x\in B(y_k,d)$ and $l \neq k,$ we can write
\begin{align*}
&-\int_{B(y_k,d)} \left( \frac{|\nabla_{\hn} J_k|^2}{1 - J_k^2(x)} +  (1 - J_k^2(x))  V_{\lambda,y_1, \ldots y_M }(x) \right) \, |u|^2 \, {\rm d}v_{\hn}\\
=&-\int_{B(y_k,d)} \left( \frac{|\nabla_{\hn} J_k|^2}{1 - J_k^2(x)} +\frac{|\nabla_{\hn} J_l|^2}{1 - J_l^2(x)} +  (1 - J_k^2(x)-J_l^2(x))  V_{\lambda,y_1, \ldots y_M }(x) \right) \, u^2 \, {\rm d}v_{\hn}\\
\geq& -\int_{B(y_k,d)} \left( \frac{|\nabla_{\hn} J_k|^2}{1 - J_k^2(x)} +\frac{|\nabla_{\hn} J_l|^2}{1 - J_l^2(x)}+  (1 - J_k^2(x)-J_l^2(x))  V_{\lambda,y_k,y_l }(x) \right) \, u^2 \, {\rm d}v_{\hn} \\
&-(M-2) V_{\lambda}(d) \int_{B(y_k,d)}  (1 - J_k^2(x))  \, u^2 \, {\rm d}v_{\hn}\,.
\end{align*}
Then we apply Lemma~\ref{key_lemma} on $B(y_k,d)$ with $(y_1, y_2)=(y_k, y_l)$, and we get
\begin{align*}
&\int_{B(y_k,d)} \left( \frac{|\nabla_{\hn} J_k|^2}{1 - J_k^2(x)} +  (1 - J_k^2(x))  V_{\lambda,y_1, \ldots y_M }(x) \right) \, u^2 \, {\rm d}v_{\hn}\\
\geq &  -\int_{B(y_k,d)} \left( \frac{\pi^2}{d^2}+2K_{N}(\lambda,d) \right) \, u^2 \, {\rm d}v_{\hn}-(M-2) V_{\lambda}(d) \int_{B(y_k,d)}  (1 - J_k^2(x))  \, u^2 \, {\rm d}v_{\hn}\,.
\end{align*}
Coming back to the estimate of $R_M$, since $K_{N}(\lambda,d)>V_{\lambda}(d)$ for all $d>0$ and $N-2\leq \lambda\leq  \left(\frac{N-1}{2}\right)^2$, we infer
\begin{align*}
R_M & \geq \lambda \int_{\hn}  (J_{M + 1}u)^2 - M V_{\lambda}(d)  \int_{\hn \setminus  \Omega}  u^2 \, {\rm d}v_{\hn} - \sum_{k = 1}^{M} \int_{B(y_k,d)} \left( \frac{\pi^2}{d^2}+2K_{N}(\lambda,d) \right) \, u^2 \, {\rm d}v_{\hn}\\
&-(M-2) V_{\lambda}(d) \sum_{k = 1}^{M} \int_{B(y_k,d)}  (1 - J_k^2(x))  \, u^2 \, {\rm d}v_{\hn} \\
& \geq \lambda \int_{\hn}  (J_{M + 1} u)^2 - M K_{N}(\lambda,d)  \int_{\hn \setminus  \Omega}  u^2 \, {\rm d}v_{\hn}\\
&  - \int_{\hn}  \frac{\pi^2}{d^2}\, u^2 \, {\rm d}v_{\hn}- K_{N}(\lambda,d) \sum_{k = 1}^{M} \int_{B(y_k,d)}  (M- (M-2)J_k^2(x))  \, u^2 \, {\rm d}v_{\hn} \,.
\end{align*}
The latter inequality, together with \eqref{mod_Lk_estimate} and summing up all terms, yields
\begin{align*}
\langle Lu, u \rangle  & \geq   \lambda \int_{\hn} \sum_{k = 1}^{M+1} \, (J_ku)^2\, {\rm d}v_{\hn} -   \int_{\hn}  \frac{\pi^2}{d^2}\, u^2 \, {\rm d}v_{\hn}\\
& -  K_{N}(\lambda,d) \sum_{k = 1}^{M} \int_{B(y_k,d)} (M+J_k^2(x))  \, u^2 - M  K_{N}(\lambda,d)  \int_{\hn \setminus  \Omega}  u^2 \, {\rm d}v_{\hn}\\
& \geq   \lambda \int_{\hn}  \, u^2\, {\rm d}v_{\hn} - \int_{\hn}  \frac{\pi^2}{d^2}\, u^2 \, {\rm d}v_{\hn}\\
& -  K_{N}(\lambda,d) (M+1)\sum_{k = 1}^{M} \int_{\Omega_k}  \, u^2 - M  K_{N}(\lambda,d)  \int_{\hn \setminus  \Omega}  u^2 \, {\rm d}v_{\hn}\\
& \geq   \lambda \int_{\hn}  \, u^2\, {\rm d}v_{\hn} - \int_{\hn}  \frac{\pi^2}{d^2}\, u^2 \, {\rm d}v_{\hn}-  K_{N}(\lambda,d) (M+1)\, \int_{\hn} \, u^2 \, {\rm d}v_{\hn}\,,
\end{align*}
where we have also exploited $J^2_k(x) \leq 1$ for all $x\in \hn$. The above estimate completes the proof.

 \section{General manifolds}\label{general}
Let $(\mathcal{M},g)$ be an $N\geq 3$ dimensional simply connected Riemannian manifold and ${\rm d}(., .)$ be the geodesic distance on $\mathcal{M}$. Furthermore, denote $K_{x_0}(x)$ the sectional curvature on $\mathcal{M}$ at a point $x_0\in \mathcal{M}$ and direction $x\in \mathcal{M}$. Within this Section we will require that
\begin{equation}\label{-c}
 K_{x_{0}}(x)\leq -c \quad \forall x_0,x\in \mathcal{M} \text{ and for some } c>0\,.
\end{equation}
Hence, only Cartan-Hadamard manifolds will be considered. We comment that this assumption can be replaced by a weaker but more cumbersome one, namely by requiring that sectional curvature in each radial direction with respect to the $M$ poles satisfies the same bound.
\medskip \par
Next, for $\lambda \leq  \left(\frac{N-1}{2} \right)^2$, $c>0$ and $x_0\in \hn$, we set
\begin{align}\label{M_pot}
V_{\lambda,c,x_0}(x) : = H_N(\lambda) \frac{1}{{\rm d^2}(x, x_0)} \,+ \, C_N(\lambda)
\frac{c}{\sinh^2 (\sqrt{c}{\rm d}(x, x_0))} + \, D_N(\lambda) \, g_c({\rm d}(x, x_0)) \,,
\end{align}
where $g_c(r):= \frac{r\,\sqrt{c}\,\coth (\sqrt{c} r)-1}{r^2}$ for all $r>0$ and $H_N(\lambda), C_N(\lambda),D_N(\lambda)$ are as given in \eqref{constants}. Finally, we recall the counterpart of Proposition \ref{improved-hardy} on $\mathcal{M}$:

\begin{prop}\cite[Corollary 4.2]{BGGP}\label{general manifold cor}
Let $(\mathcal{M},g)$ be an $N\geq 3$ dimensional Cartan-Hadamard manifold satisfying condition \eqref{-c} and let $x_0\in \mathcal{M}$. For all $ N-2\leq \lambda \leq  \left(\frac{N-1}{2} \right)^2$ the following inequality holds
 \begin{equation*}
\int_{\mathcal{M}}|\nabla_{g} u|^2\, \emph{d}v_{g} -\lambda \,c\, \int_{\mathcal{M}}  u^2\, \emph{d}v_{g}\\
  \geq  \int_{\mathcal{M}} V_{\lambda,c,x_0}(x) \,u^2 \ \emph{d}v_{g} \,,
  \end{equation*}
for all $u \in C_c^{\infty}(\mathcal{M}),$ where $V_{\lambda,c,x_0}$ is as given in \eqref{M_pot}.
\end{prop}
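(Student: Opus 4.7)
The plan is to reduce the problem to a comparison with the model space of constant curvature $-c$ and then transfer everything to $\mathcal{M}$ via Laplace comparison. Since Proposition \ref{improved-hardy} already handles the case $c=1$ (i.e.\ $\hn$), and the model space $\mathbb{M}^N_{-c}$ of constant sectional curvature $-c$ is isometric to $\hn$ rescaled by $1/\sqrt c$, the first step is to get the analogue of \eqref{unip} on $\mathbb{M}^N_{-c}$ by a plain change of variables $r\mapsto \sqrt{c}\,r$. Under this rescaling, the Laplacian and the spectral parameter $\lambda$ pick up a factor $c$, the term $1/r^2$ is unaffected, $1/\sinh^2(r)$ becomes $c/\sinh^2(\sqrt{c}\,r)$, and $g(r)$ becomes $g_c(r)$, which matches exactly the definition of $V_{\lambda,c,x_0}$ in \eqref{M_pot}. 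Hence the proposition holds on $\mathbb{M}^N_{-c}$.

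To pass to a general Cartan--Hadamard manifold, I would extract from the proof of Proposition \ref{improved-hardy} an explicit positive radial supersolution $\phi_\lambda$ of the operator $-\Delta - \lambda - V_{\lambda,x_0}$ on $\hn$. Typically $\phi_\lambda(r) = r^{\alpha}(\sinh r)^{\beta}$ with $\alpha,\beta$ chosen in terms of $\gamma_N(\lambda)=\sqrt{(N-1)^2-4\lambda}$: the constants $H_N(\lambda), C_N(\lambda), D_N(\lambda)$ in \eqref{constants} are precisely the coefficients that appear when the radial ODE $\phi'' + (N-1)\coth(r)\phi' + (\lambda+V_{\lambda,x_0})\phi = 0$ is separated into its $r^{-2}$, $\sinh^{-2}r$, and $g(r)$ components. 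Rescaling gives the corresponding supersolution $\psi_\lambda(r) = r^{\alpha}(\sinh(\sqrt{c}\,r))^{\beta}$ on the model space, satisfying
\[
-\psi_\lambda''(r) - (N-1)\sqrt{c}\coth(\sqrt{c}\,r)\,\psi_\lambda'(r) - \bigl(\lambda c + V_{\lambda,c,x_0}(r)\bigr)\psi_\lambda(r) \geq 0.
\]

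Now I would transfer $\psi_\lambda$ to $\mathcal{M}$. For $x_0\in\mathcal{M}$, set $r(x) := \mathrm{d}(x,x_0)$, which is smooth away from $x_0$ since $\mathcal{M}$ is Cartan--Hadamard. The Laplace comparison theorem under assumption \eqref{-c} gives $\Delta_g r \geq (N-1)\sqrt{c}\coth(\sqrt{c}\,r)$, and since the chosen $\psi_\lambda$ has $\psi_\lambda'(r) \leq 0$ (for the natural exponents), we obtain
\[
\Delta_g \psi_\lambda(r) = \psi_\lambda''(r) + (\Delta_g r)\,\psi_\lambda'(r) \leq \psi_\lambda''(r) + (N-1)\sqrt{c}\coth(\sqrt{c}\,r)\,\psi_\lambda'(r),
\]
so $\psi_\lambda$ remains a positive supersolution on $\mathcal{M}$ of the operator $-\Delta_g - \lambda c - V_{\lambda,c,x_0}$. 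I would then close the argument with the standard ground-state representation (Allegretto--Piepenbrink): writing $u=\psi_\lambda v$ for $u\in C_c^\infty(\mathcal{M})$ yields the identity
\[
\int_{\mathcal{M}} |\nabla_g u|^2 \, \mathrm{d}v_g - \int_{\mathcal{M}}\bigl(\lambda c + V_{\lambda,c,x_0}\bigr) u^2\, \mathrm{d}v_g = \int_{\mathcal{M}} \psi_\lambda^2 |\nabla_g v|^2\, \mathrm{d}v_g + \int_{\mathcal{M}} \frac{-\Delta_g \psi_\lambda - (\lambda c + V_{\lambda,c,x_0})\psi_\lambda}{\psi_\lambda}\, u^2\, \mathrm{d}v_g,
\]
with both terms on the right nonnegative. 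A cut-off around $x_0$ is used to handle the singularity of $\psi_\lambda$ there and then removed by the standard capacity argument, which is licit since $u$ is compactly supported and smooth.

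The main obstacle is the algebraic verification that $\psi_\lambda(r) = r^{\alpha}(\sinh(\sqrt{c}\,r))^{\beta}$ is in fact a (super)solution with the precise constants \eqref{constants}: one has to pick $\alpha=-(N-2-\gamma_N(\lambda))/2$ and $\beta=-(N-1+\gamma_N(\lambda))/2$ (or similar), then expand $\psi_\lambda''/\psi_\lambda$ and $\psi_\lambda'/\psi_\lambda$ and collect the three structural terms $1/r^2$, $c/\sinh^2(\sqrt{c}\,r)$, and $g_c(r)$. The cross-terms combine through the identity $\sqrt{c}\coth(\sqrt{c}\,r) = 1/r + r\, g_c(r)$, which is exactly what produces the $D_N(\lambda)\, g_c$ contribution and justifies the appearance of the auxiliary function $g_c$ in \eqref{M_pot}. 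Everything else is routine once this ODE computation is in place.
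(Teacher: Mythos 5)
The paper offers no proof of this statement --- it is imported verbatim from \cite{BGGP}, Corollary 4.2 --- and your reconstruction is essentially the argument used there: a radial supersolution of the form $r^{a}\big(\sinh(\sqrt{c}\,r)\big)^{b}$, transferred from the constant-curvature model to $\mathcal{M}$ via the Laplacian comparison $\Delta_g\, {\rm d}(x,x_0)\ge (N-1)\sqrt{c}\,\coth\big(\sqrt{c}\,{\rm d}(x,x_0)\big)$ combined with $\psi_\lambda'\le 0$, and closed by the ground-state substitution $u=\psi_\lambda v$ with a cut-off at the pole. The only slip is in the guessed exponents: one needs $a=(1+\gamma_N(\lambda))/2$ and $b=-(N-1+\gamma_N(\lambda))/2$ (so that $\psi_\lambda\sim r^{-(N-2)/2}$ at the pole and the coefficient of $r^{-2}$ is exactly $H_N(\lambda)=(1+\gamma_N(\lambda))^2/4$), whereas your $\alpha=-(N-2-\gamma_N(\lambda))/2$ would not produce the constants \eqref{constants}; the rest of your computation scheme, including the identity $\sqrt{c}\,\coth(\sqrt{c}\,r)=1/r+r\,g_c(r)$ generating the $D_N(\lambda)\,g_c$ term, is correct.
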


By exploiting Proposition \ref{improved-hardy} and proceedings along the lines given in the special case $\mathcal{M}=\mathbb{H}^N$ one proves the following Theorem:

 \begin{thm}\label{multip_general}
Let $(\mathcal{M},g)$ be an $N\geq 3$ dimensional Cartan-Hadamard manifold satisfying condition \eqref{-c}. Furthermore, let $y_1, \ldots y_M \in \mathcal{M}$ and $d>0$ as defined in \eqref{d}.
 Then, for all $N-2\leq  \lambda \leq  \left(\frac{N-1}{2} \right)^2$
 the following inequality holds
 \begin{equation*}
\int_{ \mathcal{M}} |\nabla_{g} u|^2 \, {\rm d}v_{g}  +\left( \frac{\pi^2}{d^2}+(M+1)K_{N,c}(\lambda,d) -\lambda c\right)\int_{ \mathcal{M} } u^2\, {\rm d}v_{g} \geq  \int_{ \mathcal{M}}  V_{\lambda,c,y_1, \ldots y_M }(x)\, u^2 \, {\rm d}v_{g}
  \end{equation*}
 for all $u \in C_{c}^{\infty} ( \mathcal{M})$ with
 \begin{equation*}
K_{N,c}(\lambda,d) : = H_N(\lambda) \frac{1}{ d^2} \,+ \, C_N(\lambda)
\frac{c}{\sinh^2 (\sqrt{c}d/2)} + \, D_N(\lambda) \, g_c(d/2) \,,
\end{equation*}
 with $g_c$ as given in \eqref{M_pot} and, $V_{\lambda,c,y_k}(x)$ being as in \eqref{M_pot},
 $$V_{\lambda,c,y_1, \ldots y_M }(x) = \sum_{i=1}^M V_{\lambda,c,y_k}(x)\,.$$
\end{thm}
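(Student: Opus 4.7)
The plan is to mimic, line by line, the $\mathbb{H}^N$ proof of Theorem~\ref{main_thm} in Section~\ref{hnproof}, replacing at every step the unipolar Poincar\'e--Hardy inequality from Proposition~\ref{improved-hardy} by its curved counterpart Proposition~\ref{general manifold cor}, and the potential $V_{\lambda,x_0}$ by $V_{\lambda,c,x_0}$. The mechanism of the hyperbolic proof is robust: it rests on (i) the IMS localization identity, (ii) a partition of unity subordinated to balls around the poles, and (iii) monotonicity/triangle-inequality estimates for the local potentials. Items (i) and (ii) are purely Riemannian and transfer verbatim. Item (iii) needs small but routine adjustments, which I describe below.

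First, I observe that Lemma~\ref{lem2} is a purely algebraic computation using $\sum_k J_k^2=1$, the chain rule $\nabla_g(J_k u)=u\nabla_g J_k+J_k\nabla_g u$, and integration by parts against $\nabla_g(u^2)$; these hold on any complete Riemannian manifold. So the IMS splitting
\[
\langle Lu,u\rangle=\sum_{k=1}^M\langle L(J_ku),(J_ku)\rangle+R_M
\]
is available on $\mathcal{M}$ with $L=-\Delta_g-V_{\lambda,c,y_1,\dots,y_M}$.

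Next, I build the partition of unity exactly as in Lemma~\ref{key_lemma}: with $J$ the cut-off on $[0,+\infty)$ defined there, set $J_k(x)=J(\mathrm{d}(x,y_k)/d)$ for $k=1,\dots,M$ and $J_{M+1}=\sqrt{1-\sum J_k^2}$. On a Cartan--Hadamard manifold the cut locus of each $y_k$ is empty, so $\mathrm{d}(\cdot,y_k)$ is smooth away from $y_k$ and $|\nabla_g\mathrm{d}(x,y_k)|=1$; hence the pointwise identity $|\nabla_g J_k|^2=d^{-2}|J'(\mathrm{d}(x,y_k)/d)|^2$ remains valid. The triangle inequality $\mathrm{d}(x,y_l)\ge\mathrm{d}(y_k,y_l)-\mathrm{d}(x,y_k)\ge(2-t)d$ for $x\in B(y_k,d)$ is also a general metric statement. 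With these ingredients, the only new point for the curved analog of Lemma~\ref{key_lemma} is that each of the three functions appearing in $V_{\lambda,c,x_0}$, namely $r\mapsto r^{-2}$, $r\mapsto c/\sinh^2(\sqrt{c}\,r)$ and $r\mapsto g_c(r)$, is strictly decreasing on $(0,+\infty)$. For the last one this follows by the substitution $s=\sqrt{c}\,r$, giving $g_c(r)=c\,g(s)$ with the original function $g$ from \eqref{potential}, which the authors already observed is decreasing. Putting the three pieces together exactly as in Lemma~\ref{key_lemma} yields, for the two-pole cut-off,
\[
\sum_{k=1}^{2}\frac{|\nabla_g J_k(x)|^2}{1-J_k^2(x)}+(1-J_1^2(x)-J_2^2(x))\,V_{\lambda,c,y_1,y_2}(x)\ \le\ \frac{\pi^2}{d^2}+2K_{N,c}(\lambda,d)
\]
a.e.\ on $B(y_1,d)\cup B(y_2,d)$, where the cross terms in $J'(t)^2/(1-J^2(t))\le \pi^2$ are handled exactly as in the hyperbolic argument.

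Finally I would reproduce Proposition~\ref{sym2_thm} and then the full $M$-pole proof of Theorem~\ref{main_thm} with trivial notational changes: on each $B(y_k,d)$ split $V_{\lambda,c,y_1,\dots,y_M}\le V_{\lambda,c,y_k}+(M-1)K_{N,c}(\lambda,d)$ using that the remaining poles are at distance $\ge d$ and invoking the monotonicity just discussed; apply Proposition~\ref{general manifold cor} with $x_0=y_k$ to the local term $\langle L(J_ku),(J_ku)\rangle$ to gain a factor $\lambda c$ on $u^2$; on $\mathbb{H}^N\setminus\Omega$ use the (plain) Poincar\'e inequality on $\mathcal{M}$, which holds with constant $\lambda_1(\mathcal{M})\ge((N-1)^2/4)\,c$ under \eqref{-c} and thus absorbs $\lambda c$; and sum up. The correction terms collect exactly as in the hyperbolic proof into $(M+1)K_{N,c}(\lambda,d)+\pi^2/d^2-\lambda c$ on the left-hand side.

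The one step that deserves careful checking, and is the main place where the hyperbolic proof could fail to transplant, is the inequality $K_{N,c}(\lambda,d)>V_{\lambda,c}(d)$ used to absorb residual contributions from $\mathcal{M}\setminus\Omega$; but since each summand in $V_{\lambda,c,x_0}$ is a strictly decreasing function of the distance, evaluating them at $d/2$ (as in $K_{N,c}$) versus $d$ (as in $V_{\lambda,c}(d)$) gives the required strict inequality for every $d>0$. Once this verification is in place, the argument closes verbatim.
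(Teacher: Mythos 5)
Your proposal is correct and coincides with the paper's own (largely omitted) proof: the authors likewise obtain Theorem \ref{multip_general} by rerunning the proof of Theorem \ref{main_thm} verbatim, substituting Proposition \ref{general manifold cor} for Proposition \ref{improved-hardy} and invoking the eikonal identity $|\nabla_g \mathrm{d}(\cdot,x_0)|=1$ on $\mathcal{M}$. Your additional checks (monotonicity of the summands of $V_{\lambda,c,x_0}$ via $g_c(r)=c\,g(\sqrt{c}\,r)$, McKean's bound $\lambda_1(\mathcal{M})\ge \tfrac{(N-1)^2}{4}c$ for the plain Poincar\'e step, and $K_{N,c}(\lambda,d)\ge V_{\lambda,c}(d)$) are exactly the "very few changes" the paper alludes to.
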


Note that for $\lambda=N-2$,  $H_N(\lambda)=\frac{(N-2)^2}{4}$ and $C_N(\lambda)=0$. Hence, since $g_c(r) \rightarrow 0$ for every $r>0$ as $c\rightarrow 0^+$, one recovers the Euclidean inequality \eqref{m_euc_hardy}.

\medskip
From Theorem \ref{multip_general} one may derive the counterpart of Corollary \ref{hardy_thm} on $\mathcal{M}$. Let $\bar d_c=\bar d_c(M,N)>0$ be defined implicitly as follows
  \begin{equation}\label{overd_manif}
  \frac{\pi^2 }{\bar d_c^2}+(M+1)\left(\frac{(N-2)^2}{4} \frac{1}{\bar d_c^2}+ \frac{(N-3)(N-2)}{2} g_c(\bar d_c/2)\right)=(N-2)c\,.
   \end{equation}
 We have
\begin{cor}\label{hardy_thm_manif}
Let $(\mathcal{M},g)$ be an $N\geq 3$ dimensional Cartan-Hadamard manifold satisfying condition \eqref{-c}. Furthermore, let $y_1, \ldots y_M \in \mathcal{M}$, $d>0$ as defined in \eqref{d} and $\bar d_c=\bar d_c(M,N)>0$ as defined in \eqref{overd_manif}.  If $d\geq \bar d_c $, the following inequality holds

 \begin{equation}\label{hardy_multi_manif} \begin{aligned}
 \int_{\mathcal{M}} |\nabla_{g} u|^2 \, {\rm d}v_{g}
 &  \geq    \frac{(N-2)^2}{4}  \sum_{k = 1}^M \int_{\mathcal{M}}  \frac{u^2}{{\rm d^2}(x, y_k)} \, {\rm d}v_{g}\\
 &+ \frac{(N-2)(N-3)}{2}  \sum_{k = 1}^M  \int_{\mathcal{M}} g_c({\rm d}(x, y_k))\,u^2 \, {\rm d}v_{g}
 \end{aligned} \end{equation}
 for all $u \in C_{c}^{\infty} ( \mathcal{M})$, where $g_c$ is as given in \eqref{M_pot}.
 \end{cor}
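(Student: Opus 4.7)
The plan is simply to specialize Theorem \ref{multip_general} to $\lambda = N-2$ and use the hypothesis $d \geq \bar d_c$ to absorb the $L^2$-correction term on the left-hand side. First I would evaluate the constants in \eqref{constants} at $\lambda = N-2$: since $\gamma_N(N-2) = \sqrt{(N-1)^2-4(N-2)} = \sqrt{(N-3)^2} = N-3$ for $N\geq 3$, one gets $H_N(N-2) = (N-2)^2/4$, $C_N(N-2) = 0$, and $D_N(N-2) = (N-2)(N-3)/2$. With these values the multipolar potential $V_{N-2,c,y_1,\ldots,y_M}$ from \eqref{M_pot} reduces exactly to the integrand on the right-hand side of \eqref{hardy_multi_manif}, and $K_{N,c}(N-2,d)$ becomes $(N-2)^2/(4d^2) + (N-2)(N-3)\,g_c(d/2)/2$.

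Next I would apply Theorem \ref{multip_general} with $\lambda = N-2$ to obtain
\begin{equation*}
\int_{\mathcal{M}} |\nabla_g u|^2 \, {\rm d}v_g + \Phi(d) \int_{\mathcal{M}} u^2 \, {\rm d}v_g \geq \int_{\mathcal{M}} V_{N-2,c,y_1,\ldots,y_M}(x)\, u^2 \, {\rm d}v_g,
\end{equation*}
where $\Phi(d) := \pi^2/d^2 + (M+1) K_{N,c}(N-2,d) - (N-2)c$. The defining equation \eqref{overd_manif} is precisely the statement $\Phi(\bar d_c) = 0$. To conclude I would argue that $\Phi$ is strictly decreasing on $(0,\infty)$: the term $\pi^2/d^2$ is decreasing, the term $1/d^2$ inside $K_{N,c}$ is decreasing, and the substitution $s = \sqrt c\,d/2$ yields $g_c(d/2) = c\,g(\sqrt c\,d/2)$ with $g$ the strictly decreasing function introduced after \eqref{potential}. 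Hence $\Phi(d) \leq \Phi(\bar d_c) = 0$ for all $d \geq \bar d_c$, and the now non-positive $L^2$-term on the left can be discarded, yielding \eqref{hardy_multi_manif}.

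The only matter requiring a separate check — more a sanity verification than a genuine obstacle — is that \eqref{overd_manif} does admit a unique positive solution $\bar d_c$. This follows from the fact that the left-hand side of \eqref{overd_manif} is continuous and strictly decreasing in $d$, blows up as $d \to 0^+$ (because of the $1/d^2$ singularity), and tends to $0$ as $d \to +\infty$; it must therefore meet the positive level $(N-2)c$ at exactly one point. All the substantive geometric and analytic work has been absorbed into Theorem \ref{multip_general} and Proposition \ref{general manifold cor}, so no additional input beyond the algebraic identification of constants and the monotonicity of $\Phi$ is needed.
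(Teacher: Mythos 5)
Your proposal is correct and is exactly the derivation the paper intends (the paper states the corollary as an immediate consequence of Theorem \ref{multip_general} without writing out the details): specialize to $\lambda=N-2$, where $\gamma_N(N-2)=N-3$ gives $H_N=(N-2)^2/4$, $C_N=0$, $D_N=(N-2)(N-3)/2$, and use the monotonicity of the correction term together with $\Phi(\bar d_c)=0$ from \eqref{overd_manif} to discard the now non-positive $L^2$-term for $d\geq\bar d_c$. Your identification $g_c(r)=c\,g(\sqrt{c}\,r)$ and the well-posedness check for $\bar d_c$ are both accurate, so nothing is missing.
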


Inequality \eqref{hardy_multi_manif} should be compared with inequality (3.5) in \cite{FFK} which, under the same assumptions of Corollary \ref{hardy_thm_manif}, reads:
 \begin{equation} \begin{aligned}\label{hardy_multi_crista}
 \int_{\mathcal{M}} |\nabla_{g} u|^2 \, {\rm d}v_{g}
 &  \geq    \frac{(N-2)^2}{M^2}  \sum_{1 \leq i< j \leq M} \int_{\mathcal{M}} \left| \frac{\nabla_g ({\rm d}(x, y_i)) }{{\rm d}(x, y_i)}-\frac{\nabla_g ({\rm d}(x, y_j)) }{{\rm d}(x, y_j)}\right|^2 u^2 \, {\rm d}v_{g}\\
 &+ \frac{(N-2)(N-1)}{M}  \sum_{k = 1}^M  \int_{\mathcal{M}} g_c({\rm d}(x, y_k))\,u^2 \, {\rm d}v_{g}
 \end{aligned} \end{equation}
 for all $u \in C_{c}^{\infty} (\mathcal{M})$. Hence, the potential in \eqref{hardy_multi_manif}, compared to that in \eqref{hardy_multi_crista}, is smaller near poles for $M=2$ but larger for $M\geq 3$.
\par \medskip
We conclude with the analogue of Corollary \ref{hardy_thm_lam} on $\mathcal{M}$, which follows along similar lines. For $N-2\leq  \lambda < \left(\frac{N-1}{2} \right)^2$, let $\bar d_{\lambda,c}=\bar d_{\lambda,c}(M,N)>0$ be defined implicitly as follows
  \begin{equation*}
  \frac{\pi^2 }{\bar d_{\lambda,c} }+(M+1)\left(\frac{1}{4} \frac{1}{\bar d_{\lambda,c}^2}+\frac{(N-1)(N-3)}{4}\ \frac{c}{\sinh^2 ((\sqrt{c}\bar d_{\lambda,c})/2)} \right)=\left(\left(\frac{N-1}{2} \right)^2 -\lambda \right)c\,.
   \end{equation*}
 There holds:
\begin{cor}\label{hardy_thm_lamc}
Let $(\mathcal{M},g)$ be an $N\geq 3$ dimensional Cartan-Hadamard manifold satisfying condition \eqref{-c}. Furthermore, let $y_1, \ldots y_M \in \mathcal{M}$ and $d>0$ as defined in \eqref{d}. Let $N-2\leq \lambda < \left(\frac{N-1}{2} \right)^2$, if $d\geq \bar d_{\lambda,c} $, the following inequality holds

 \begin{equation*}
\begin{aligned} \int_{\mathcal{M}} |\nabla_{g} u|^2 \, {\rm d}v_{g}
 &  \geq  \lambda c   \int_{\mathcal{M}}  u^2 \, {\rm d}v_{g} +   \frac{1}{4}  \sum_{k = 1}^M  \int_{\mathcal{M}}  \frac{u^2}{{\rm d^2}(x, y_k)} \, {\rm d}v_{g}\\ \notag
 &+ \frac{(N-1)(N-3) c}{4}  \sum_{k = 1}^M  \int_{\mathcal{M}}    \frac{u^2}{ \sinh^2 ((\sqrt{c}{\rm d}(x, y_k))/2)}\, \, {\rm d}v_{g}
\end{aligned}
 \end{equation*}
 for all $u \in C_{c}^{\infty} (\mathcal{M})$.
 \end{cor}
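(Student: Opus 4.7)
The plan is to specialize Theorem~\ref{multip_general} to the endpoint value $\lambda^{\ast} := \bigl(\frac{N-1}{2}\bigr)^{2}$ and then absorb the resulting $L^{2}$-constant on the left-hand side into a weaker Poincar\'e term with coefficient $\lambda c$, $\lambda < \lambda^{\ast}$. This is the direct analogue, in the manifold setting, of the argument used to derive Corollary~\ref{hardy_thm_lam} from Theorem~\ref{main_thm} in the hyperbolic case.

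First I would evaluate the three constants defining the Poincar\'e--Hardy potential at $\lambda^{\ast}$. Since $\gamma_{N}(\lambda^{\ast}) = 0$, the formulas in \eqref{constants} yield $H_{N}(\lambda^{\ast}) = 1/4$, $C_{N}(\lambda^{\ast}) = (N-1)(N-3)/4$, and, crucially, $D_{N}(\lambda^{\ast}) = 0$. Hence the $g_{c}$-terms disappear from both $V_{\lambda^{\ast}, c, y_{k}}$ and $K_{N,c}(\lambda^{\ast}, d)$, leaving exactly the Hardy and $\sinh^{-2}$ contributions appearing in the claimed inequality. Thus Theorem~\ref{multip_general} applied with $\lambda = \lambda^{\ast}$ furnishes
\begin{equation*}
\int_{\mathcal M} |\nabla_{g} u|^{2}\,{\rm d}v_{g} + \Bigl( \tfrac{\pi^{2}}{d^{2}} + (M+1) K_{N,c}(\lambda^{\ast}, d) - \lambda^{\ast} c \Bigr) \int_{\mathcal M} u^{2}\,{\rm d}v_{g} \geq \int_{\mathcal M} V_{\lambda^{\ast}, c, y_{1},\ldots,y_{M}}(x)\, u^{2}\,{\rm d}v_{g}\,,
\end{equation*}
whose right-hand side is precisely the desired multipolar Hardy--sinh combination.

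Second, it remains to show that under the assumption $d \geq \bar d_{\lambda,c}$ the bracketed coefficient on the left is bounded above by $-\lambda c$. Rearranging, this amounts to
\begin{equation*}
\tfrac{\pi^{2}}{d^{2}} + (M+1)\!\left( \tfrac{1}{4d^{2}} + \tfrac{(N-1)(N-3)}{4}\,\tfrac{c}{\sinh^{2}(\sqrt{c}\,d/2)} \right) \leq \Bigl( \bigl(\tfrac{N-1}{2}\bigr)^{2} - \lambda \Bigr) c\,,
\end{equation*}
which holds with equality at $d = \bar d_{\lambda,c}$ by the implicit defining relation. Each summand on the left is strictly decreasing in $d > 0$, so the inequality extends to all $d \geq \bar d_{\lambda,c}$, and transferring $\lambda c \int_{\mathcal M} u^{2}$ to the right yields the statement.

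The only delicate point is to ensure that $\bar d_{\lambda,c}$ is a well-defined positive quantity; this is where the strict bound $\lambda < \bigl(\frac{N-1}{2}\bigr)^{2}$ intervenes, guaranteeing that the right-hand side of the defining equation is strictly positive, so that the continuity and strict monotonicity of the left-hand side in $d$ produce a unique $\bar d_{\lambda,c} > 0$. Apart from this bookkeeping, the proof is a routine specialization of Theorem~\ref{multip_general}, and I do not expect any substantial obstacle.
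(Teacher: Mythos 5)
Your proposal is correct and takes essentially the same route the paper intends: the paper gives no written proof of Corollary~\ref{hardy_thm_lamc}, stating only that it ``follows along similar lines'' from Theorem~\ref{multip_general}, and your specialization to $\lambda^{\ast}=\bigl(\tfrac{N-1}{2}\bigr)^{2}$ (so that $\gamma_N=0$, $H_N=1/4$, $C_N=(N-1)(N-3)/4$, $D_N=0$) followed by absorbing the $L^2$-constant via the monotonicity in $d$ of the defining relation for $\bar d_{\lambda,c}$ is exactly that argument. The one discrepancy is the $/2$ inside the $\sinh$ in the corollary's displayed potential, which does not match $V_{\lambda,c,x_0}$ in \eqref{M_pot} as produced by Theorem~\ref{multip_general}; comparison with Corollary~\ref{hardy_thm_lam} indicates this is a typo in the paper, and your argument yields the consistent version with $\sinh^2(\sqrt{c}\,{\rm d}(x,y_k))$.
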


\subsection{Proof of Theorem \ref{multip_general}}
The proof of Theorem \ref{multip_general} follows by repeating the proof of Theorem~\ref{main_thm} with very few changes, hence we omit it. We limit ourselves to remark that the counterpart of Lemma \ref{key_lemma} on $\mathcal{M}$ follows by replacing Proposition \ref{improved-hardy} with Proposition \ref{general manifold cor} and by noting that for every $x_0 \in \mathcal{M}$ the eikonal condition holds:
$$|\nabla_{g} { \rm d}(., x_0)|^2=1 \quad \text{a.e. on } \mathcal{M}\,.$$

\section{Remarks on the criticality issue}\label{super_solution_sec}
The criticality issue of Proposition \ref{improved-hardy} does not carry over to the multipolar framework of Theorem \ref{main_thm}. A critical multipolar Hardy-weight can instead be derived from the general results of \cite{DFP}. For simplicity we focus on the hyperbolic setting. Let $y_1, \ldots y_M \in \hn$ and let $G(x, y_i)$ be the Green function of $-\Delta_{\hn}$ at $y_i$, which reads
 $$
 G(x, y_i) = \int_{{\rm d}(x, y_i)}^{\infty} (\sinh s)^{-(N-1)} \, {\rm d}s\,.
 $$
Finally, we set
\begin{equation}\label{W}
W(x):= \left(\frac{1}{M + 1} \right)^2 \left[ \sum_{i = 1}^{M} \left| \dfrac{|\nabla  G(x, y_i) |}{ G(x, y_i)}\right|^2
  + \sum_{1 \leq i < j\leq M}
   \left|  \dfrac{|\nabla  G(x, y_i)|}{G(x, y_i)} -  \dfrac{|\nabla  G(x, y_j) |}{G(x, y_j)}\right|^2 \right]
\end{equation}
and we state the counterpart of inequality \eqref{m_euc_hardy_4} in $\hn$:
\begin{prop}\label{multicri}
Let $N \geq 3$ and $M \geq 2$. Furthermore, let $y_1, \ldots y_M \in \hn$, $d>0$ as defined in \eqref{d} and $W(x)$ as defined in \eqref{W}. Then, the following multipolar inequality holds
 \begin{equation*}\label{hardy_multi_critic}
 \int_{\hn} |\nabla_{\hn} u|^2 \, {\rm d}v_{\hn}
  \geq   \int_{\hn} W(x)\, u^2  \, {\rm d}v_{\hn}
 \end{equation*}
 for all $u \in C_{c}^{\infty} (\hn)$, and the operator $-\Delta_{\hn}-W$ is critical in $\hn \setminus \{ y_1,...,y_M \}$.
 \end{prop}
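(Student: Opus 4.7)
The plan is to realize $-\Delta_{\hn}-W$ as a critical operator via the positive-solution scheme of \cite{DFP}. Concretely, I would exhibit an explicit positive solution of $(-\Delta_{\hn}-W)\phi=0$ on $\Omega := \hn\setminus\{y_1,\ldots,y_M\}$, deduce the inequality from the Agmon--Allegretto--Piepenbrink principle, and then invoke the abstract criticality criterion of Devyver--Fraas--Pinchover.

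The candidate positive solution is
$$
\phi(x) := \prod_{i=1}^{M} G(x,y_i)^{1/(M+1)},
$$
which is smooth and strictly positive on $\Omega$. Setting $a_i := \nabla_{\hn}\log G(\cdot,y_i)$ and using $\Delta_{\hn} G(\cdot,y_i)=0$ on $\Omega$, one gets $\Delta_{\hn}\log G(\cdot,y_i) = -|a_i|^2$, whence
$$
\Delta_{\hn}\log\phi = -\frac{1}{M+1}\sum_{i=1}^{M}|a_i|^2, \qquad |\nabla_{\hn}\log\phi|^2 = \frac{1}{(M+1)^2}\Bigl|\sum_{i=1}^{M}a_i\Bigr|^2.
$$
Plugging these into the identity $\Delta_{\hn}\phi/\phi = \Delta_{\hn}\log\phi + |\nabla_{\hn}\log\phi|^2$, expanding $\bigl|\sum_i a_i\bigr|^2 = M\sum_i|a_i|^2 - \sum_{i<j}|a_i-a_j|^2$, and using $\tfrac{1}{M+1} - \tfrac{M}{(M+1)^2} = \tfrac{1}{(M+1)^2}$, one obtains exactly $-\Delta_{\hn}\phi = W\phi$ on $\Omega$. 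The inequality then follows by the ground-state substitution $u=\phi v$: an integration by parts yields
$$
\int_{\hn}|\nabla_{\hn} u|^2 \, {\rm d}v_{\hn} - \int_{\hn} W u^2 \, {\rm d}v_{\hn} = \int_{\hn}\phi^2 |\nabla_{\hn} v|^2 \, {\rm d}v_{\hn} \ge 0
$$
for all $u\in C_c^{\infty}(\Omega)$, and the extension to $C_c^{\infty}(\hn)$ is a routine capacity/truncation argument since the discrete set $\{y_1,\ldots,y_M\}$ has zero $H^1$-capacity for $N\ge 3$.

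For the criticality part I would appeal to the general theorem of \cite{DFP}: if a symmetric second-order operator $P-W$ admits a positive solution $\phi$ on $\Omega$ which is also of \emph{minimal growth at infinity} (equivalently, a global minimal positive solution/ground state), then $P-W$ is critical in $\Omega$. The main step, where the real work lies, is checking this minimality at each end of $\Omega$: near each pole $y_i$ the known asymptotics $G(x,y_i)\sim c_N\,{\rm d}(x,y_i)^{2-N}$ forces $\phi$ to carry the correct blow-up profile, while the exponential decay of $G(x,y_i)$ towards the ideal boundary of $\hn$ forces minimal decay at infinity; these two inputs let one verify the DFP hypothesis and conclude criticality of $-\Delta_{\hn}-W$ on $\hn\setminus\{y_1,\ldots,y_M\}$.
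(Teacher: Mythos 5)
Your argument is essentially the paper's: both rest on the Devyver--Fraas--Pinchover construction applied to $u_0\equiv 1$ and $u_i=G(\cdot,y_i)$ with uniform weights $\alpha_j=\tfrac{1}{M+1}$, and your $\phi=\prod_{i}G(\cdot,y_i)^{1/(M+1)}$ is exactly the product $\prod_j u_j^{\alpha_j}$ whose logarithmic-derivative identity $-\Delta_{\hn}\phi=W\phi$ underlies their Theorem~12.3, which the paper simply cites. Your explicit verification of that identity and the ground-state substitution are correct and make the inequality part self-contained; for criticality you, like the paper, ultimately defer to the DFP machinery (the paper checks only the simpler relative minimal-growth conditions $u_i/u_0\to 0$ at infinity and $u_j/u_i\to 0$ at $y_i$, which is all their theorem requires, rather than verifying directly that $\phi$ is a ground state as you propose).
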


It is worth noting that
 $$
 \dfrac{|\nabla  G(x, y_i) |}{ G(x, y_i)} \sim
\, \frac{N-2}{{\rm d}(x, y_i)} \quad \text{as } x \rightarrow y_i\,,
 $$
 while
 $$
\dfrac{|\nabla  G(x, y_i) |}{ G(x, y_i)}= N-1+\frac{(N-1)^2}{2(N-2)} \, (\sinh({\rm d}(x, y_i)))^{ -2} \quad \text{as } x \rightarrow +\infty\,.
$$
Namely,
$$W(x) \sim M\,\left(\frac{N-2}{M + 1} \right)^2\,\frac{1}{{\rm d}^2(x, y_i)} \quad \text{as } x \rightarrow y_i \quad \text{and} \quad W(x) \sim \frac{M(N-1)^2}{(M + 1)^2} \quad \text{as } x\rightarrow +\infty\,. $$
Hence, the above weight is smaller than that in \eqref{hardy_multi_crista} (obtained in \cite{FFK}) near poles but larger at infinity. Furthermore, near poles one recovers the same behavior of the Euclidean weight in \eqref{m_euc_hardy_4}.

\begin{proof}
The proof of Proposition \ref{multicri} follows from Theorem 12.3 in \cite{DFP} which in $\hn$ reads: let $P$ be a subcritical operator, if there exist $u_0,...,u_{M}$, $M+1$ positive solutions to the equation $P u=0$ in $\hn \setminus \{ y_1,...,y_M \}$ of minimal growth near $y_i$ and $\infty$, i.e.
$$\lim_{x \rightarrow \infty} \frac{u_i(x)}{u_0(x)}=0 \text{ for all }  i\neq 0 \quad \text{and} \quad \lim_{x \rightarrow x_i} \frac{u_j(x)}{u_i(x)}=0 \text{ for all } j\neq i\,,$$
then
$$W_{\alpha}(x)=\sum_{i<j} \alpha_i \alpha_j \left|\nabla\left(\log \frac{u_i(x)}{u_j(x)} \right) \right|^2 \quad \text{with }\alpha_j\in (0,1/2] \text{ and }\sum_{j=0}^M \alpha_j=1\,,$$
is a critical Hardy-weight for $P$. \par In our case, we have $P=-\Delta_{\hn}$ and we may take the functions $u_0(x) = 1$ and $u_i(x) = G(x, y_i)$, with $i=1,...,M$, which are positive solutions to $-\Delta_{\hn}u=0$ in $\hn \setminus \{ y_1,...,y_M \}$ and satisfy the required growth conditions. Finally, the weight $W$ in \eqref{W} is obtained by choosing in $W_{\alpha}$ uniformly  $\alpha_j = \frac{1}{M + 1}$ for all $j = 0, \ldots, M$.
\end{proof}


\par\bigskip\noindent
\textbf{Acknowledgments.}  The first author is partially supported by MIUR grant Dipartimenti di Eccellenza 2018-2022. The second author is partially supported by an INSPIRE faculty fellowship. The third author is partially supported by the PRIN project ``Equazioni alle derivate parziali di tipo ellittico e parabolico: aspetti geometrici, disuguaglianze collegate, e applicazioni'' (Italy). The first  and third authors are members of the Gruppo Nazionale per l'Analisi Matematica, la Probabilit\`a e le loro Applicazioni (GNAMPA) of the Istituto Nazionale di Alta Matematica (INdAM).

\normalsize


\end{document}